\newtheorem{theorem}{Theorem}
\newtheorem{proposition}{Proposition}
\newtheorem{lemma}{Lemma}
\newtheorem{defn}{Definition}
\newtheorem{cor}{Corollary}
\newtheorem{rem}{Remark}
\newcommand{\eps}{\varepsilon}
\newcommand{\ds}{\displaystyle}
\title{Twisted torus knots with Horadam parameters}
\author{Brandy Doleshal}
\address{Brandy Doleshal
\newline Sam Houston State University
\newline Huntsville, TX
\newline USA}
\email{bdoleshal@shsu.edu}
\urladdr{}
\begin{document}

\title{Twisted torus knots with Horadam parameters}

\author{Brandy Doleshal}
\address{Sam Houston State University \\
Box 2206 \\
Huntsville, TX, 77341}

\maketitle

\begin{abstract}
Sangyop Lee has done much work to determine the knot types of twisted torus knots, including classifying the twisted torus knots which are the unknot. Among the unknotted twisted torus knots are those of the form $K(F_{n+2}, F_n, F_{n+1}, -1)$, where $F_i$ is the $i$th Fibonacci number. Here, we consider twisted torus knots with parameters that are defined recursively, similarly to the Fibonacci sequence. We call these \textit{Horadam parameters}, after the generalization of the Fibonacci sequence introduced by A.F. Horadam. Here, we provide families of twisted torus knots that generalize Lee's work with Horadam parameters. Additionally, we provide lists of primitive/primitive and primitive/Seifert twisted torus knots and connect these lists to the Horadam twisted torus knots.
\end{abstract}

\keywords{twisted torus knots, Horadam sequences, primitive/primitive knots, primitive/Seifert knots}

\section{Introduction}

Twisted torus knots were introduced by Dean \cite{DeanSFSSurgeries} with the goal of studying knots with Seifert fibered space surgeries. Since then, others have viewed the twisted torus knots as interesting as a class of knots and have studied their knot types \cite{krishnamorton}, \cite{LeeTTKUnknots}, \cite{LeeTTKTypes}, \cite{LeeTTKTorus}, \cite{LeedePaiva}, their bridge spectra \cite{BowmanBridgeSpectra}, their ribbon lengths \cite{KimRibbonLength}, their knot Floer homology \cite{VafaeeKnotFloer}, and their knot polynomials \cite{adnanpark}, \cite{BavierDoleshal}, \cite{MortonAlexPoly}.

In particular, Lee has spent a great deal of effort to determine which twisted torus knots are unknots, torus knots and satellite knots \cite{LeeTTKUnknots}, \cite{LeeTTKTypes}, \cite{LeeTTKTorus}, also collaborating with de Paiva on these efforts \cite{LeedePaiva}. Among this work, Lee \cite{LeeTTKUnknots} shows that the twisted torus knots contain a family with parameters in the Fibonacci sequence that are all unknotted. Here we find a family of twisted torus knots that also all have the same knot type, all of whose parameters are consecutive terms in some Horadam sequence, a generalization of the Fibonacci sequence. We call knots with this type of parameters \textit{Horadam twisted torus knots}. We further explore the Horadam twisted torus knots, and make connections with work of Kadokami \cite{Kadokami}.

In \cite{DeanSFSSurgeries}, Dean discusses some requirements for twisted torus knots to be primitive/primitive and primitive/Seifert. Here we provide the list of primitive/primitive knots that are twisted torus knots and show that not all of these are apparently Horadam twisted torus knots. We then correct and expand work of Dean, connecting the primitive/Seifert twisted torus knots to the Horadam twisted torus knots.

This work is organized into four sections. In Section \ref{background}, we provide the necessary definitions and background. In Section \ref{Horadam}, we provide some lemmas about Horadam sequences. In Section \ref{HTTKs}, we consider the twisted torus knots with Horadam parameters, proving Proposition \ref{prop:knottypes}. Finally, in Section \ref{primplus}, we provide lists of twisted torus knots that are primitive/primitive and primitive/Seifert, proving Theorems \ref{thm:pp}, \ref{thm:phS}, and \ref{thm:pS}. Further, we consider their connections to the Horadam twisted torus knots.

\section{Background and definitions}\label{background}
We begin by defining the twisted torus knots. Let $p$ and $q$ be relatively prime integers that are both at least 2, let $m$ and $n$ be relatively prime nonzero integers, and let $r$ be an integer with $2 \le r \le p+q$. A \textit{twisted torus knot} $K(p,q,r,m,n)$ is constructed from a torus knot $T(p,q)$ in the following way. First consider a disk $D$ in the torus which intersects $T(p,q)$ in $r$ adjacent parallel arcs, as pictured in Figure \ref{fig:diskwithtorusknot}. Consider another torus containing $r$ parallel copies of the torus knot $T(m,n)$ and a disk, $D'$, intersecting the given link $r$ times, one for each component of the link. We excise $D$ and $D'$ from their respective tori , so that we have created two once-punctured tori. We glue the two punctured tori along the boundaries of $D$ and $D'$ so that the $2r$ intersection points with $T(p,q)$ and the $r$ copies of $T(m,n)$ match and provide a coherently oriented curve. 
\begin{center}
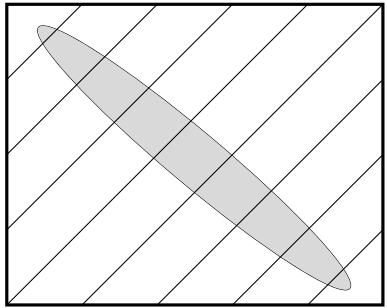
\begin{figure}
	\begin{tikzpicture}
	 	\draw[rotate = -40] (.65,3.1) ellipse (2.7 and 0.35);
		\fill[rotate = -40, gray!30] (.65,3.1) ellipse (2.7 and 0.35);
		\draw[very thick] (0,0) rectangle (5,4);
		\draw (0,3) -- (1,4);
		\draw (0,2) -- (2,4);
		\draw (0,1) -- (3,4);
		\draw (0,0) -- (4,4);
		\draw (1,0) -- (5,4);
		\draw (2,0) -- (5,3);
		\draw (3,0) -- (5,2);
		\draw (4,0) -- (5,1);
	\end{tikzpicture}\caption{$T(5,3)$ with $D$ intersecting 8 strands}\label{fig:diskwithtorusknot}
\end{figure}
\end{center}

We note here that the twisted torus knot is a curve in the genus 2 Heegaard surface for $S^3$. This Heegaard surface bounds two genus 2 handlebodies, which we denote by $H$ and $H'$.

Lee \cite{LeeTTKTypes} shows that if $|m| \ge 2$ and $|n| \ge 2$, then $K(p,q,r,m,n)$ is a satellite knot, so if we are interested in hyperbolic knots, we can focus on the case when $m = \pm 1$ or $n = \pm 1$. Often, $m$ is considered to be 1, and we suppress the $m$ in that case, calling the twisted torus knot $K(p,q,r,n)$. Lee \cite{LeeTTKTypes} shows that switching $p$ and $q$ does not change the knot type, so we also typically assume that $p >q$.
\begin{lemma}[Lee \cite{LeeTTKTypes}]\label{lem:lee}
$K(p,q,r,n)$ and $K(q,p,r,n)$ have the same knot type.
\end{lemma}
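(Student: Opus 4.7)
My plan is to find a self-homeomorphism of $S^3$ that carries $K(p,q,r,n)$ to $K(q,p,r,n)$. The natural candidate is the classical involution $\phi$ of $S^3$ that exchanges the two solid tori of its standard genus-1 Heegaard splitting. This $\phi$ preserves the Heegaard torus $T$ setwise, swapping its meridian and longitude, and therefore sends the $(p,q)$-torus knot $T(p,q)\subset T$ to the $(q,p)$-torus knot $T(q,p)\subset T$. So at the level of the underlying torus knot, the lemma is immediate from the $p\leftrightarrow q$ symmetry of the standard Heegaard splitting.

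The second step is to check that $\phi$ also respects the remaining data of the twisted torus knot construction. The disk $D$ in the definition meets $T(p,q)$ in $r$ adjacent parallel arcs; since $\phi$ is a homeomorphism of $T$, the image $\phi(D)$ is a disk in $T$ meeting $T(q,p)$ in $r$ adjacent parallel arcs. The remainder of the construction excises $D$ and glues in a once-punctured torus containing $r$ parallel copies of $T(m,n)$, identifying $2r$ boundary points with $\partial D$ coherently. This gluing is determined up to isotopy by the combinatorial pattern of strands meeting $\partial D$, so performing it along $\phi(D)\subset T$ with $T(q,p)$ in place of $D\subset T$ with $T(p,q)$ yields the knot $K(q,p,r,n)$. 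Extending $\phi$ across the glued-in punctured torus by the identity of the twist region then gives the ambient homeomorphism of $S^3$ we want.

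The main obstacle is the verification that $\phi(D)$, viewed as a disk on $T$ meeting $T(q,p)$ in $r$ adjacent parallel arcs, is isotopic through such disks to whatever "standard" disk is used to define $K(q,p,r,n)$. This is what makes the $r$ parameter transfer cleanly across the swap. It reduces to the statement that the group of self-homeomorphisms of the pair $(T,T(q,p))$ acts transitively on the set of disks in $T$ meeting $T(q,p)$ in $r$ adjacent parallel arcs: any two such disks differ by a rotation along the torus that slides the chosen block of $r$ strands to any other block, and such rotations extend to ambient isotopies of $S^3$ preserving $T(q,p)$ setwise. Once this uniqueness is in place, composing with $\phi$ produces the isotopy from $K(p,q,r,n)$ to $K(q,p,r,n)$ and completes the proof.
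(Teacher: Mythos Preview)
The paper does not prove this lemma at all; it is stated as a result of Lee \cite{LeeTTKTypes} and simply cited. So there is no argument in the paper against which to compare your sketch.

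Your approach via the swap involution $(z_1,z_2)\mapsto(z_2,z_1)$ on $S^3\subset\mathbb{C}^2$ is the natural one and is essentially correct. Two points deserve a little more care. First, you should record explicitly that this involution is orientation-preserving on $S^3$ (it lies in $SO(4)$), since that is what guarantees the twist parameter stays $n$ rather than becoming $-n$; on the Heegaard torus itself $\phi$ is orientation-reversing, so this is not automatic from your description. Second, the phrase ``extending $\phi$ across the glued-in punctured torus by the identity'' is slightly off: $\phi$ is already a homeomorphism of all of $S^3$, so nothing needs to be extended. A cleaner way to run your argument is to view $K(p,q,r,n)$ as obtained from $T(p,q)$ by replacing $r$ adjacent parallel strands inside a small $3$-ball by the tangle $(\sigma_{r-1}\cdots\sigma_1)^n$. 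Since $\phi$ is an orientation-preserving diffeomorphism of $S^3$, it carries this ball-with-tangle to an isotopic ball-with-tangle meeting $r$ adjacent strands of $T(q,p)$, and your transitivity observation (which, as you say, is realised by the $S^1\times S^1$ action on $S^3$, hence by ambient isotopies) identifies the result with the standard model for $K(q,p,r,n)$.
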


In this case, when $r \le p$, we can consider the twisted torus knot $K(p,q,r,n)$ as the closure of a braid on $p$ strands given by the braid word $$(\sigma_{p-1} \cdots \sigma_2 \sigma_1)^q (\sigma_{r-1} \cdots \sigma_2 \sigma_1)^n,$$
so we often use braids to help our understanding of twisted torus knots. Here the symbol $\sigma_i$ represents the braid shown in Figure \ref{fig:sigmai}, where the crossing is between the $i$ and $i+1$ strands.

When $p < r \le p+q$, we consider the case where the disk $D$ intersects more than $p$ strands, which can also be considered as the closure of a braid, though slightly more complicated. To determine the corresponding braid when $r = p+q$, we consider Figure \ref{fig:ttkpplusq}, which shows a $(p,q)$-torus knot. The thick bar, which we call a ``switch", indicates that $p$ strands both enter and leave the switch, but that the strands change from one grouping to another in the switch. The number next to the strands represents the number of parallel strands running in parallel with no crossings.

In this figure, $q$ strands are pulled over slightly to intersect the disk $D$, shown in gray, referenced in the definition of a twisted torus knot. In this picture, we can see that if we consider the dot to be the braid axis, we have a braid on $p+q$ strands. Further, all $p+q$ strands will intersect $D$, which will create $n$ full twists on the $p+q$ strands when identified with $D'$. After the strands leave the disk, the leftmost $q$ strands pass under the rightmost $p$ strands. Hence, in the case when $r = p+q$, $K(p,q,p+q, n)$ can be written as the closure of the braid 
shown in Figure \ref{fig:r=p+qbraid}, where the number in the box represents the number of times $\sigma_{p+q - 1} \cdots \sigma_1$ appears in the braid word.

In this work, we will use the braid representations of twisted torus knots extensively. When it is convenient, we will represent full twists on a number of strands as a link surgery diagram.

\begin{center}
\begin{figure}[h!]
	\begin{tikzpicture}
		\draw (0,0) -- (0,1);
		\draw (.5,0) -- (.5,1);
		\node at (1,.5) {$\cdots$};
		\draw (1.5,0) -- (1.5,1);
		\draw (2.5,0) .. controls (2.45,.5)and (2.05,.5) .. (2,1);	
		\fill[white] (2.15,.45) rectangle (2.35,.55);
		\draw (2,0) .. controls (2.05,.5)and (2.45,.5) .. (2.5,1);		
		\draw (3,0) -- (3,1);
		\node at (3.5,.5) {$\cdots$};
		\draw (4,0) -- (4,1);
		\draw (4.5,0) -- (4.5,1);
	\end{tikzpicture}\caption{$\sigma_i$}\label{fig:sigmai}
\end{figure}
\end{center}

\begin{center}
\begin{figure}
	\begin{tikzpicture}
 		\fill[gray!30, rotate = -45] (.65,-1.35) ellipse (.15 and .75);
	 	\draw[thick] (0,0) ellipse (4 and 2.5);
		\draw[thick] (0,.2) ellipse (1.25 and .5);
		\fill[white] (-1.3,.75) rectangle (1.3,.2);
		\draw[thick] (-1.1,-.05) .. controls (-.75,.3)and (.75,.3) .. (1.1,-.05);	
	  	\draw (0,1.25) .. controls (-1.5, 1.25) and (-2.95,0) .. (-2.25,-.75);
		\node[scale = .85] at (-1.95, 1.0) {$p$};
		\draw[thick] (-2.5, -1.0) -- (-2, -.5);
		\draw[thick] (0, .95) -- (0, 1.55);
		\draw (-2.35,-.85) .. controls (-2.45, -.95) and (0,-2) .. (0,-2.5);
		\node[scale = .85] at (-1.95, -1.5) {$q$};
		\draw (-2.15,-.65) .. controls (0, -2.25) and (2.5,-1) .. (2.5,0);
		\node[scale = .85] at (-1.25, -.65) {$p-q$};
		\draw[dotted] (0,-2.5) -- (.75,-.2);
		\draw (.75,-.2) .. controls (-2, -1.75) and (2.2,-1) .. (2.2,0);
		\draw (2.5,0) .. controls (2.5, .95) and (1.5,1.5) .. (0,1.4);
		\draw (2.2,0) .. controls (2.2, .75) and (1.5,1.1) .. (0,1.1);
		\draw[thick, rotate = -45] (.65,-1.35) ellipse (.15 and .75);
		
		\filldraw[red] (.35,-.65) circle (1.25pt);
\end{tikzpicture}\caption{Constructing $K(p,q,p+q,n)$}\label{fig:ttkpplusq}
\end{figure}
\end{center}

\begin{center}
\begin{figure}[ht]
	\begin{tikzpicture}
		\draw[thick] (1.5,1) -- (1.5, 2); 
		\node at (2.1,1.7) {$p+q$}; 
		\draw[thick] (0.5,0) rectangle (2.5,1);
		\node at (1.5,.5) {$n(p+q)$};  
		\draw[thick] (1,0) .. controls (1,-.4)and (1.4,-1.1) .. (1.45,-1.1);
		\node at (0.7,-0.7) {$q$}; 
		\draw[thick] (2,0) .. controls(2,-1) and (1,-1.5) .. (1,-2);
		\node at (2.3,-0.7) {$p$}; 
		\draw[thick] (1.55,-1.25) .. controls (1.7,-1.5) and (2,-1.8) .. (2,-2);
	\end{tikzpicture}\caption{$K(p,q,p+q, n)$ as a braid}\label{fig:r=p+qbraid}
\end{figure}
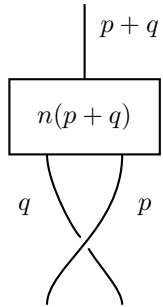
\end{center}

\section{Horadam Sequences}\label{Horadam}

The Fibonacci sequence has been studied extensively and several generalizations of it have been established and studied. Here we use the generalization introduced by Horadam \cite{Horadam2} and studied by countless others since.

\begin{defn}
An $(m, n; a,b)$-Horadam sequence $\mathcal H$ is the sequence $\{ H_0, H_1, H_2 \ldots \}$ where $H_0 = m$, $H_1 = n$ and $H_k = aH_{k-2}+bH_{k-1}$ for $k \ge 2$. \end{defn}

As an example, the Fibonacci sequence is the $(0,1; 1,1)$-Horadam sequence, which we will denote by $\mathcal F = \{ F_0, F_1, F_2, \ldots \}$. In this work, we focus mainly on the $(m,n; 1, 1)$-Horadam sequences, and we will shorten the notation to $(m,n)$-Horadam sequence, or $\mathcal H_{m,n}$. In \cite{Horadam2}, $m$ and $n$ are arbitrary integers, but in this work, we will at times need to restrict $m$ and $n$ to the positive integers that are relatively prime.

Here we state some lemmas proved by Horadam \cite{Horadam1} and provide some results about Horadam sequences that will be useful for our knot-theoretic results.

\begin{lemma}[Horadam \cite{Horadam1}]\label{lem:seq}
If $\mathcal H$ is the $(m,n)$-Horadam sequence, then $$H_k = mF_{k-1} + nF_k$$ for $k \ge 1$, where $F_i$ is the $i$th Fibonacci number.
\end{lemma}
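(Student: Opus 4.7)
The plan is to prove this by strong induction on $k$, treating the two base cases $k=1$ and $k=2$ directly and then exploiting the fact that both $\{H_k\}$ and $\{F_k\}$ satisfy the same second-order recurrence $x_k = x_{k-2} + x_{k-1}$ (since we are in the $(m,n;1,1)$ setting).

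For the base cases, I would simply unpack definitions. When $k=1$, we have $H_1 = n$ while $mF_0 + nF_1 = m\cdot 0 + n\cdot 1 = n$, using the standard convention $F_0 = 0$, $F_1 = 1$. When $k=2$, the recurrence gives $H_2 = H_0 + H_1 = m + n$, which equals $mF_1 + nF_2 = m\cdot 1 + n\cdot 1$. These match.

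For the inductive step, fix $k \ge 3$ and assume the formula holds for $k-1$ and $k-2$. Then
\begin{align*}
H_k &= H_{k-2} + H_{k-1} \\
    &= \bigl(mF_{k-3} + nF_{k-2}\bigr) + \bigl(mF_{k-2} + nF_{k-1}\bigr) \\
    &= m\bigl(F_{k-3} + F_{k-2}\bigr) + n\bigl(F_{k-2} + F_{k-1}\bigr) \\
    &= mF_{k-1} + nF_k,
\end{align*}
where the last equality uses the Fibonacci recurrence.

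There is no real obstacle here; the only thing to be careful about is the indexing convention for the Fibonacci numbers (in particular that $F_0 = 0$ and $F_1 = 1$), since the statement of the lemma uses $F_{k-1}$ and $F_k$ starting from $k=1$, which forces us to reach back to $F_0$. Once that convention is fixed, both base cases and the inductive step are immediate.
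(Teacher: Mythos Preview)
Your induction argument is correct and is the standard proof of this identity. Note that the paper itself does not supply a proof of this lemma: it is stated with attribution to Horadam \cite{Horadam1} and used as a black box, so there is no ``paper's own proof'' to compare against. Your write-up is exactly what one would expect for this elementary result.
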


Using the previous lemma, Horadam \cite{Horadam1} finds a relationship between any two terms in an $(m,n)$-Horadam sequence.

We let $s = m^2+mn -n^2$, which is the negative of Horadam's $e$ \cite{Horadam1}.

\begin{lemma}\label{lem:sloperelations}
For an $(m,n)$-Horadam sequence $\mathcal H$ and for $k \ge 1$ , 
\begin{enumerate}
\item $H_{k}^2 +H_{k+1}H_{k} - H_{k+1}^2 = (-1)^{k}s$, 
\item $\ds H_{k}^2 +H_{k}H_{k-1} - H_{k-1}^2 = (n^2+mn -m^2) + 2\varepsilon_k s  + 2\sum_{i=1}^{k-1} H_i^2$, and
\item $\ds H_{k}^2 +H_{k}H_{k-1} + H_{k-1}^2 = (n^2+mn -m^2) + 2\varepsilon_k s  + 2\sum_{i=1}^{k-1} H_i^2 + 2H_{k-1}^2$
\end{enumerate}
where $\varepsilon_k = 1$ if $k$ is even and $\varepsilon_k = 0$ if $k$ is odd.

\end{lemma}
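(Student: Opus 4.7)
The plan is to prove (1) by induction, introduce a telescoping auxiliary identity, and then deduce (2) and (3) by straightforward algebra.

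For (1), the base case $k=1$ is the direct computation $H_1^2+H_2H_1-H_2^2 = n^2+(m+n)n-(m+n)^2 = -(m^2+mn-n^2) = -s$. For the inductive step, substitute $H_{k+2}=H_k+H_{k+1}$ and simplify:
\begin{align*}
H_{k+1}^2+H_{k+2}H_{k+1}-H_{k+2}^2
&= H_{k+1}^2+(H_k+H_{k+1})H_{k+1}-(H_k+H_{k+1})^2 \\
&= -\bigl(H_k^2+H_kH_{k+1}-H_{k+1}^2\bigr) \\
&= -(-1)^k s \;=\; (-1)^{k+1}s.
\end{align*}
This is a Cassini-style identity, and the computation is essentially forced by the Fibonacci-style recurrence.

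The key bridge to (2) is the telescoping identity
\begin{equation*}
H_k H_{k-1} \;=\; mn+\sum_{i=1}^{k-1} H_i^2,
\end{equation*}
which I would prove by a second short induction on $k$: the base case $k=1$ reads $H_1H_0=nm$, and the inductive step follows from $H_{k+1}H_k = (H_k+H_{k-1})H_k = H_k^2+H_{k-1}H_k$, which adds exactly the next term to the running sum. With this in hand, apply (1) at index $k-1$ to get $H_{k-1}^2+H_kH_{k-1}-H_k^2 = (-1)^{k-1}s$, rearrange to obtain $H_k^2+H_kH_{k-1}-H_{k-1}^2 = 2H_kH_{k-1}+(-1)^k s$, substitute the telescoping identity, and finally use the two numerical facts $(-1)^k = 2\varepsilon_k - 1$ and $s+(n^2+mn-m^2) = 2mn$ to convert $2mn+(-1)^k s$ into $(n^2+mn-m^2)+2\varepsilon_k s$. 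This yields (2). Part (3) is then immediate: add $2H_{k-1}^2$ to both sides of (2).

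The main obstacle, such as it is, is recognizing the auxiliary telescoping identity; a naive direct induction on (2) is awkward because the running sum $\sum_{i=1}^{k-1}H_i^2$ and the parity indicator $\varepsilon_k$ interact in a way that hides the inductive step. Once the telescoping identity is isolated and (1) is available, (2) and (3) reduce to bookkeeping a constant and a parity sign.
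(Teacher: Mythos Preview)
Your proof is correct, but the route differs from the paper's. For (2), the paper does precisely the ``naive direct induction'' you call awkward: it sets $s_k = H_k^2+H_kH_{k-1}-H_{k-1}^2$, verifies $k=1,2$, and proves the recursion $s_k = s_{k-1} + 2(-1)^{k-2}s + 2H_{k-1}^2$ by using part~(1) in the form $H_kH_{k-2} = (-1)^{k-2}s + H_{k-1}^2$. Your approach instead isolates the telescoping identity $H_kH_{k-1} = mn + \sum_{i=1}^{k-1}H_i^2$ and then applies (1) once at index $k-1$, converting the constant via $2mn - s = n^2+mn-m^2$ and $(-1)^k = 2\varepsilon_k-1$. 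The paper's induction is not actually awkward---the increment $s_k-s_{k-1}$ factors cleanly---so your editorial remark undersells it; but your telescoping argument is arguably more transparent, since it separates the combinatorial content (the running sum of squares) from the sign bookkeeping, and it avoids needing two base cases. Either way, (3) follows by adding $2H_{k-1}^2$, as both you and the paper note.
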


\begin{proof}
Part 1 is given by Horadam \cite{Horadam1}, up to shift in terms that results in a negative on the right side of the equation, accounting for the fact that $s = -e$.

We prove part 2 by induction. For the base case, we let $k=1$, so we have $H_1^2 + H_1H_0 - H_0^2 = n^2 + mn - m^2$, by the definition of $\mathcal H$. On the other hand $\varepsilon_1 = 0$ because $1$ is odd and $\ds \sum_{i=1}^{0} H_i^2 $ is an empty sum, so the right side of the equation is also $ n^2 + mn - m^2$. When $k=2$, we have $H_{2}^2 +H_{2}H_{1} - H_{1}^2 = (m+n)^2 + n(m+n)- n^2 = m^2 + 3mn + n^2$ on the left of the equation, and $\ds (n^2+mn -m^2) + 2\varepsilon_2 s + 2\sum_{i=1}^{1} H_i^2  = n^2 + mn -m^2 + 2m^2 + 2mn -2n^2 + 2n^2 = m^2 + 3mn + n^2$ on the right, as expected.

Let $s_i = H_{i}^2 +H_{i}H_{i-1} - H_{i-1}^2$ for $i \ge 1$, and suppose the statement is true for $s_{k-1}$ for some $k\ge 2$. That is, our inductive hypothesis tells us that $$\ds s_{k-1} = (n^2+mn -m^2) + 2\varepsilon_{k-1} s  + 2\sum_{i=1}^{k-2} H_i^2.$$ Consider $s_k = H_{k}^2 +H_{k-1}H_{k} - H_{k-1}^2$. Using the definition of $\mathcal H$, we have that $H_{k-1} = H_{k} -H_{k-2}$ and $H_{k} = H_{k-1} + {H_{k-2}}$. Then we can rewrite $s_k$ as $H_{k}^2 + H_{k-1}^2 + H_{k-1}H_{k-2} - (H_k - H_{k-2})^2 = H_{k-1}^2 + H_{k-1}H_{k-2} -H_{k-2}^2+2H_{k}H_{k-2} $. This expression is equal to $s_{k-1} + 2 H_k H_{k-2}.$

 Part 1 tells us that $H_{k}^2 +H_{k+1}H_{k} - H_{k+1}^2 = (-1)^{k}s$. Using the definition of $\mathcal H$, we then have that $H_{k}H_{k-2}  = (-1)^{k-2} s + H_{k-1}^2$, so that $s_k = s_{k-1} + 2(-1)^{k-2} s + 2H_{k-1}^2$. Then, using the inductive hypothesis, we have 
\begin{align*}
s_k &= (n^2+mn -m^2) + 2\varepsilon_{k-1} s  + 2\sum_{i=1}^{k-2} H_i^2 +2(-1)^{k-2} s + 2H_{k-1}^2\\
& =(n^2+mn -m^2) + 2(\varepsilon_{k-1}  + (-1)^{k-2})s + 2\sum_{i=1}^{k-1} H_i^2 .
\end{align*}

If $k$ is even, then $k-1$ is odd and $\varepsilon_{k-1}  + (-1)^{k-2} = 1 = \varepsilon_k$. If $k$ is odd, then $k-1$ is even and $\varepsilon_{k-1}  + (-1)^{k-2} = 0 = \varepsilon_k$. Hence, the second part is proven.

The third part of the statement is obtained by adding $2H_{k-1}^2$ to both sides of the equation in the previous part of the lemma.
\end{proof}

The previous lemma tells us that for all $k \ge 2$, $H_{k-1}^2 +H_{k-1}H_{k} - H_{k}^2$ is the same value, up to sign. In the next two lemmas, we show that for the very similar-looking $H_{k}^2 +H_{k-1}H_{k} - H_{k-1}^2$ forms an increasing sequence so that every value is different.

\begin{lemma}\label{lem:increasing}
For $m$ and $n$ positive, let $\mathcal H$ be the $(m,n)$-Horadam sequence, and for $k \ge 2$, let $s_k = H_{k}^2 +H_{k-1}H_{k} - H_{k-1}^2$ and let $t_k = H_{k+1}^2 +H_{k+1}H_{k} + H_{k}^2$. The sequences $\{s_k \}_{k\ge 2}$ and $\{t_k \}_{k\ge 1}$ are both increasing. 
\end{lemma}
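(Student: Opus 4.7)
My plan is to reduce both monotonicity claims to the positivity of products of Horadam terms, which is automatic because $m,n>0$ forces $H_i>0$ for every $i\ge 0$.

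To handle $\{s_k\}_{k\ge 2}$, I would recycle the telescoping identity derived inside the induction step of the proof of Lemma \ref{lem:sloperelations}, part~2, where it was shown that $s_k = s_{k-1} + 2H_k H_{k-2}$ for $k \ge 2$. This immediately gives
\[
s_{k+1} - s_k \;=\; 2\,H_{k+1}\,H_{k-1},
\]
and since every $H_i > 0$, the difference is strictly positive for all $k\ge 2$, so $\{s_k\}_{k\ge 2}$ is strictly increasing.

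For $\{t_k\}_{k\ge 1}$, I would compute $t_{k+1}-t_k$ directly and then use the Fibonacci-type recurrence to factor. Grouping $H_{k+2}^2+H_{k+2}H_{k+1}=H_{k+2}H_{k+3}$ against $H_{k+1}H_k+H_k^2=H_k H_{k+2}$, and then pulling out $H_{k+2}$, yields
\[
t_{k+1} - t_k \;=\; H_{k+2}\bigl(H_{k+3} - H_k\bigr).
\]
One further application of the recurrence gives $H_{k+3}-H_k = (H_{k+2}+H_{k+1})-H_k = 2H_{k+1}$, so $t_{k+1}-t_k = 2H_{k+1}H_{k+2}$, which is positive.

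I do not anticipate any serious obstacle: both arguments collapse to a one-line telescoping plus positivity. The only bookkeeping is verifying that the index range on the recursion for $s_k$ inherited from the previous proof is compatible with the range $k\ge 2$ in the present statement, which it is, and noting that the positivity hypothesis on $m,n$ is exactly what allows us to conclude that every $H_i$ (and hence every product appearing in the two differences) is strictly positive.
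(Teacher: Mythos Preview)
Your proposal is correct. For $\{t_k\}$ it is essentially the paper's argument: the paper also derives $t_k-t_{k-1}=2H_{k+1}H_k$ (your computation just factors differently before arriving at the same product). For $\{s_k\}$ you take a slightly different and cleaner route. The paper cites the \emph{final} form of the recursion from Lemma~\ref{lem:sloperelations}, namely $s_k-s_{k-1}=2\bigl((-1)^{k-2}s+H_{k-1}^2\bigr)$, and then must argue by cases on the parity of $k$ to show the right-hand side is positive. You instead cite the \emph{intermediate} identity $s_k-s_{k-1}=2H_kH_{k-2}$, which is indeed established in that same proof, and conclude positivity immediately from $H_i>0$. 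Your version is shorter and avoids the case split; the paper's version has the minor advantage that the quantity $(-1)^{k-2}s+H_{k-1}^2$ is reused later (in Lemma~\ref{lem:slopesaredifferent}) to show the consecutive differences are at least~$2$, so the extra work is not wasted there.
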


\begin{proof}

In the proof of Lemma \ref{lem:sloperelations}, we showed that $s_k = s_{k-1} + 2(-1)^{k-2} s + 2H_{k-1}^2$. Rearranging, we see that $s_k - s_{k-1} = 2(-1)^{k-2} s + 2H_{k-1}^2$, so to show the sequence is increasing, it is enough to show that $(-1)^{k-2} s + H_{k-1}^2$ is positive for all $k\ge 3$. That is, we want to show that $H_{k-1}^2 > (-1)^{k-1}s$. 

When $k$ is even, this inequality is $H_{k-1}^2 > n^2-mn-m^2$. Because $\mathcal H$ is an increasing sequence for positive values of $m$ and $n$, $H_{k-1}^2\ge H_1^2 = n^2$. Since $m$ and $n$ are positive, $n^2-mn-m^2<n^2 \le H_{k-1}^2$.

When $k$ is odd, the inequality becomes $H_{k-1}^2 > m^2+mn-n^2$. Again, $\mathcal H$ is an increasing sequence, so $H_{k-1}^2 \ge H_2^2 = m^2 + 2mn +n^2$, which is evidently greater than $m^2+mn-n^2$ for positive values of $m$ and $n$.

To show that $\{t_k \}_{k\ge 1}$ is increasing, we use the recursion relation for $\mathcal H$ to rewrite $t_k = H_{k+1}^2 +H_{k+1}H_{k} + H_{k}^2$ as $t_k = H^2_k + H_{k}H_{k-1} + H_{k-1}^2 + 2 H_{k+1}H_{k}$, which we recognize as $t_k = t_{k-1} + 2H_{k+1}H_k$. Then $t_k - t_{k-1} = 2H_{k+1}H_k$, which is at least 2 for all $k \ge 2$, so $\{t_{k}\}_{k\ge1}$ is increasing.
\end{proof}

\begin{rem}\label{rem:extendlemma}
We note here that if $m<n$, the sequence $\{s_k\}$ is increasing for $k\ge1$ as well since $H_{k-1}^2>H_0^2 = m^2$ and $m^2+mn-n^2 < m^2$. 
\end{rem}

\begin{lemma}\label{lem:slopesaredifferent}
For $m$ and $n$ positive, let $\mathcal H$ be the $(m,n)$-Horadam sequence. 
\begin{enumerate}
\item For $k \ge 2$, let $s_k = H_{k}^2 +H_{k-1}H_{k} - H_{k-1}^2$. For all $l \ge 1$, if $l\neq k$, then $s_l\neq s_k$. In fact, $|s_l - s_k|>1$.
\item For $k \ge 1$, let $t_k = H_{k+1}^2 +H_{k+1}H_{k} + H_{k}^2$. For all $l \ge 1$, if $l\neq k$, then $t_l\neq t_k$. In fact, $|t_l - t_k|>1$.
\end{enumerate}
\end{lemma}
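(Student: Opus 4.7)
The plan is to upgrade the strict monotonicity proved in Lemma \ref{lem:increasing} to a gap of at least $2$ between consecutive terms. Since both sequences are integer-valued and strictly increasing, if I can show each consecutive difference is at least $2$, then for any $l \ne k$ in the allowed range, say with $l > k$, monotonicity automatically gives
\[
|s_l - s_k| \ge s_{k+1} - s_k \ge 2 > 1,
\]
and likewise for $t$; so the entire problem reduces to bounding the two consecutive differences below by $2$.

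For part (2), the proof of Lemma \ref{lem:increasing} already computes $t_k - t_{k-1} = 2 H_{k+1} H_k$. Since $m$ and $n$ are positive integers, every term $H_i$ of the $(m,n)$-Horadam sequence is a positive integer, so $H_{k+1}H_k \ge 1$ and therefore $t_k - t_{k-1} \ge 2$, which finishes this part.

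For part (1), I would invoke the identity
\[
s_k - s_{k-1} = 2\bigl[(-1)^{k-2} s + H_{k-1}^2\bigr]
\]
recorded in the induction step of the proof of Lemma \ref{lem:sloperelations}. The bracketed quantity is an integer, and the parity case analysis from Lemma \ref{lem:increasing} (bounding $H_{k-1}^2 \ge H_1^2 = n^2 > n^2 - mn - m^2$ when $k$ is even, and $H_{k-1}^2 \ge H_2^2 = (m+n)^2 > m^2 + mn - n^2$ when $k$ is odd) shows this integer is strictly positive, hence at least $1$. Consequently $s_k - s_{k-1} \ge 2$ for all $k \ge 3$, which is exactly the bound needed.

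The only observation of real substance is that each consecutive difference is already twice an integer, so the positivity that Lemma \ref{lem:increasing} proves immediately upgrades to a gap of size at least $2$; beyond that, the proof is a direct bookkeeping consequence of the two preceding lemmas and presents no genuine obstacle.
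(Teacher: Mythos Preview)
Your approach is essentially the paper's: both proofs exploit that the consecutive difference is twice a positive integer, so strict monotonicity upgrades to a gap of at least $2$. Part~(2) is fine.

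However, there is a gap in part~(1). Read the statement again: $k$ ranges over $k\ge 2$, but $l$ is allowed to be $1$. Your argument only establishes $s_k - s_{k-1}\ge 2$ for $k\ge 3$, which handles all comparisons among $s_2,s_3,s_4,\dots$ but says nothing about $s_1$ versus $s_k$ for $k\ge 2$. Lemma~\ref{lem:increasing} likewise only asserts monotonicity of $\{s_k\}_{k\ge 2}$, and Remark~\ref{rem:extendlemma} extends this to $k\ge 1$ only when $m<n$; so you cannot simply absorb $s_1$ into the increasing chain without further comment.

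The fix is easy and is exactly what the paper does: compute $s_2 - s_1$ directly. From the same identity with $k=2$ (your even-$k$ bound $H_{k-1}^2 = H_1^2 = n^2 > n^2 - mn - m^2$ in fact already applies there), or just by expanding, one gets $s_2 - s_1 = 2m(m+n)\ge 2$. Since this is positive, $s_1 < s_2 < s_3 < \cdots$ in all cases, and the consecutive-difference bound now covers $l=1$ as well. The paper additionally runs a short contradiction argument to show $s_1\ne s_k$ when $m>n$, but once you have $s_2-s_1>0$ that step is redundant.
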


\begin{proof}
The assertion that $s_l \neq s_k$ for $l \neq k$ is nearly an immediate result of Lemma \ref{lem:increasing}, with the exception of the case $l=1$. That is, we must show that $s_1 \neq s_k$ for $k\ge 2$. In Remark \ref{rem:extendlemma}, we noted that the $\{s_k\}_{k\ge1}$ is increasing when $m<n$, so we need only show that $s_1 \neq s_k$ when $m>n$. 

Suppose, for the sake of contradiction, that $m>n$ and there exists a $k\ge 2$ such that $s_k = s_1 = n^2 + mn - m^2$. Then we have  $$(n^2+mn -m^2) + 2\varepsilon_k s  + 2\sum_{i=1}^{k-1} H_i^2 = n^2+mn -m^2.$$ 

Simplifying this equation, we have $$  \varepsilon_k s   =  -  \sum_{i=1}^{k-1} H_i^2.$$ 

The right side of the equation is negative for $k\ge 2$, but $\varepsilon_k s = \varepsilon_k(m^2 + mn - n^2)$, which is at least zero because $m>n$. 

Since $\{s_k\}_{k\ge2}$ is an increasing integer sequence, to show that the difference between any two terms is greater than 1, we need only show that the difference between consecutive terms is greater than 1.  
As noted in the proof of Lemma \ref{lem:sloperelations}, $s_k - s_{k-1} = 2((-1)^{k-2} s + H_{k-1}^2)$ for $k\ge 2$. In the proof of Lemma \ref{lem:increasing}, we showed that $(-1)^{k-2} s + H_{k-1}^2$ is positive for $k\ge 2$, so $s_k - s_{k-1} \ge 2$ for these values of $k$.

To finish the proof, we must show that the absolute value of the difference between $s_2$ and $s_1$ is at least 2. Since $s_2 - s_1 = 2m(m+n) $ and $m$ and $n$ are positive, this number is always at least 2.

The assertion that $t_l \neq t_k$ for $l \neq k$ is an immediate result of Lemma \ref{lem:increasing}, and the proof that $|t_l-t_k| >1$ is similar to the proof for $\{s_k\}$, with the exception that $t_k - t_{k-1} = 2H_{k+1}H_k$, which is always at least 2. 
\end{proof}

Next, we prove two lemmas about when the $(m,n)$-Horadam sequence is a subset of a $(\pm 1,a)$-Horadam sequence for some integer $a \ge 2$.

\begin{lemma}\label{lem:subseq}
Suppose $1<  m < n$ and $\gcd(m,n) = 1$. Let $\{q_l, \ldots, q_1, q_0\}$ and $\{r_l, \ldots, r_1, r_0\}$ be the sequences of quotients and remainders, respectively, provided by the Euclidean algorithm for $m$ and $n$.  The $(m,n)$-Horadam sequence $\mathcal H_{m,n}$ is a subsequence of a $(\pm 1,a)$-Horadam sequence $\mathcal H_{\pm 1, a}$, for some integer $a\ge 2$, if and only if $q_i = 1$ for $1 \le i \le l$ and $q_0 \in\{1, 2\}$.
\end{lemma}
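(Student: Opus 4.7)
The argument translates between consecutive-pair representations of $(m,n)$ in a $(\pm 1,a)$-Horadam sequence and the continued-fraction (equivalently, Euclidean) expansion of $n/m$. First I would verify that $\mathcal H_{m,n}$ is a subsequence of $\mathcal H_{\pm 1,a}$ if and only if $m$ and $n$ occur as consecutive terms of $\mathcal H_{\pm 1,a}$: since both sequences obey the same second-order recursion, matching two consecutive values propagates to the entire tail, while a nonconsecutive match would force $m+n$ to lie strictly between two consecutive terms of $\mathcal H_{\pm 1,a}$ once one is past the short nonmonotonic prefix of $\mathcal H_{-1,a}$, contradicting the subsequence property. The hypothesis $m>1$ rules out the degenerate matches involving $H'_0=\pm 1$ separately.

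With this reduction, Lemma \ref{lem:seq} (extended to $k\in\mathbb Z$ via the usual extension of $F_k$ to negative indices) gives the explicit parameterization $m=\varepsilon F_{k-1}+aF_k$ and $n=\varepsilon F_k+aF_{k+1}$ when $(m,n)=(H'_k,H'_{k+1})$ in $\mathcal H_{\varepsilon,a}$. For the forward direction I would execute the Euclidean algorithm on $(H'_k,H'_{k+1})$ by repeatedly invoking the Horadam recursion. Because $H'_{j+1}=H'_j+H'_{j-1}$ with $H'_{j-1}<H'_j$ throughout the increasing tail, each division produces quotient $1$ and remainder $H'_{j-1}$, collapsing the pair to $(H'_{j-1},H'_j)$. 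Iterating yields a string of ones, after which the algorithm enters a terminal phase whose form depends on $\varepsilon$ and $a$: for $\varepsilon=+1$ it terminates at $(1,a)$ after $k$ ones and a final quotient of $a$; for $\varepsilon=-1$ with $a\ge 3$ it ends via $(a-1,2a-1)$, contributing a $2$ and then a final $a-1$; for $\varepsilon=-1$ with $a=2$ the last two quotients collapse into a single $3$. In every case the quotient string is all ones except possibly for the second-to-last entry, which lies in $\{1,2\}$, matching the condition on $q_0$.

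For the reverse direction, suppose the Euclidean quotients satisfy the stated condition, and split on the value of $q_0$. If $q_0=1$, the canonical continued fraction of $n/m$ is $[1;1,\ldots,1,q_L]$ with $L-1$ leading ones. The convergent recursion then yields $m=F_{L-1}q_L+F_{L-2}$ and $n=F_Lq_L+F_{L-1}$, and Lemma \ref{lem:seq} identifies $(m,n)$ with $(H'_{L-1},H'_L)$ in $\mathcal H_{1,q_L}$, giving $a=q_L\ge 2$. If $q_0=2$, the continued fraction is $[1;1,\ldots,1,2,q_L]$ with $L-2$ initial ones; the same recursion yields $m=F_Lq_L+F_{L-2}$ and $n=F_{L+1}q_L+F_{L-1}$, which are recognized as $(H'_L,H'_{L+1})$ in $\mathcal H_{-1,q_L+1}$ with $a=q_L+1\ge 3$.

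The main obstacle is the bookkeeping around $\mathcal H_{-1,a}$: the nonmonotonic initial segment $-1,a,a-1$ must be handled carefully when proving the consecutive-pair reduction, and the $a=2$ edge case in the forward direction requires recognizing that the terminal quotients $2$ and $1$ collapse into a single $3$, so that the resulting pattern still fits $q_0\in\{1,2\}$.
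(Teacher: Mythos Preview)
Your approach is sound and broadly parallels the paper's: both run the Euclidean algorithm on consecutive terms $(H'_j,H'_{j+1})$ using the recursion $H'_{j+1}=H'_j+H'_{j-1}$ to produce a string of quotient-$1$ steps, then handle the terminal behavior separately for $\varepsilon=\pm 1$. Your reverse direction via convergent formulas and Lemma~\ref{lem:seq} is a clean algebraic alternative to the paper's step-by-step verification, and you are actually more careful than the paper in justifying that ``subsequence'' forces $m,n$ to be \emph{consecutive} terms of $\mathcal H_{\pm 1,a}$ (the paper simply asserts this). One point needs repair: your summary sentence ``the quotient string is all ones except possibly for the second-to-last entry'' is misstated, since the \emph{last} full continued-fraction entry (your $q_L$) equals $a$, $a-1$, or $3$, not $1$. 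The paper's quotients $q_l,\ldots,q_0$ stop at the step yielding $r_0=1$ and do not include this final entry; in the paper's notation your $q_L$ is $r_1$, not one of the $q_i$. Once you make that identification explicit, your case analysis correctly delivers $q_i=1$ for $i\ge 1$ and $q_0\in\{1,2\}$, and conversely your convergent computation with $a=r_1$ (when $q_0=1$) or $a=r_1+1$ (when $q_0=2$) exhibits $(m,n)$ as $(H'_k,H'_{k+1})$ in the appropriate $\mathcal H_{\pm 1,a}$.
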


Because a pair of numbers with the property given in Lemma \ref{lem:subseq} has the largest ($q_0 = 1$), or nearly the largest ($q_0 = 2$), possible number of equations in its Euclidean algorithm, we call $(m,n)$ an \textit{maximal pair} if the pair has this property. 

\begin{proof}
First we prove that the given conditions guarantee that $\mathcal H_{m,n}$ is a subsequence of $\mathcal H_{\pm 1, a}$. Let the Euclidean algorithm for $m$ and $n$ be given by the sequence of equations 
 \begin{align*} 
n &=   q_l m + r_l\\ 
m &=  q_{l-1} r_l + r_{l-1} \\
r_l &= q_{l-2} r_{l-1} + r_{l-2} \\
& \vdots \\
r_2 &= q_0 r_1 + r_0
\end{align*}
Because $m$ and $n$ are coprime, we know that $r_0 = 1$. If $q_i = 1$ for $0\le i \le l$, then we have the sequence $\mathcal H_{1, r_1}$ so that $H_0 = r_0 = 1$, $H_1 = r_1$ and $H_i = r_i = r_{i - 1} + r_{i-2}$ for $2 \le i \le l$. Then $m = H_{l+1}$ and $n = H_{l+2}$, and we have that $\mathcal H_{m,n}$ is a subsequence of $\mathcal H_{1, r_1}$.

When $q_i = 1$ for $1\le i \le l$ and $q_0 = 2$, then we consider the sequence $\mathcal H_{-1, r_1 + 1} = \{-1, r_1 + 1, r_1,  2r_1 + 1, (2r_1 + 1) + r_1, \ldots \} = \{-1, r_1 + 1, r_1,  r_2, r_3, \ldots \}$. Again $m = H_{l+1}$ and $n = H_{l+2}$, and we have that $\mathcal H_{m,n}$ is a subsequence of $\mathcal H_{-1, r_1 + 1}$.

Now suppose that $\mathcal H_{m,n}$ is a subsequence of $\mathcal H_{1, a}$, so that $m$ and $n$ are consecutive terms in $\mathcal H_{1, a}$. That is $m = H_j $ and $n = H_{j+1} $ for some $j\ge 1$. From our assumptions, we have $n = m + H_{j-1}$ and $n = q_l m + r_l$ for some positive integers $q_l$ and $r_l$ with $1\le r_l \le m-1$. Combining these equations with the recursive definition of a Horadam sequence, we have  $$H_{j-1} = (q_l - 1) H_j + r_l .$$ Because $\mathcal H_{1, a}$ is an increasing sequence, $H_{j} > H_{j-1} $. In order to satisfy this inequality, we must have $q_l = 1$ and $r_l = H_{j-1}$. 

Now we turn our attention to the equation $$m = q_{l-1} r_l + r_{l-1}.$$ Since $m = H_j = H_{j-1} + H_{j-2}$ and $r_l = H_{j-1}$, we substitute and subtract $H_{j-1}$ from both sides to obtain $$H_{j-2} = (q_{l-1}-1) H_{j-1} + r_{l-1}.$$ 
Again, $\mathcal H_{1,a}$ is an increasing sequence, so $q_{l-1} = 1$ and $r_{l-1}= H_{j-2}$.

Consider the next equation from the Euclidean algorithm, we have $$r_l = q_{l-2} r_{l-1} + r_{l-2}.$$ Using $r_l = H_{j-1}$ and $r_{l-1} = H_{j-2}$, along with recursion and subtraction technique used above, we obtain the equation $$H_{j-3} = (q_{l-2}-1) H_{j-2} + r_{l-2},$$ which again gives $q_{l-2} = 1$ and $r_{l-2} = H_{j-3}$. 

We continue this process for each equation in the sequence of equations from the Euclidean algorithm, finding at each step that $$H_{j-i} = (q_{l-i+1}-1) H_{j-i+1} + r_{l-i+1}, $$ so that $q_{l-i+1}=1$ and $r_{l-i+1} = H_{j-i}$ for $1\le i \le j-1$. When $i = j$, we have $$H_{0} = (q_{l-j+1}-1) H_{1} + r_{l-j+1}, $$ 

We know that each of $r_0$ and $H_0$ is the unique element of its respective sequence that is equal to 1, so we see that the index $l-j+1$ must be 0, and in fact, $j = l+1$. Then $q_0 =1 $ as well. That is, all of the terms in the sequence of quotients $\{q_l,  \ldots, q_1, q_0\}$ are equal to 1. 

Suppose instead that $\mathcal H_{m,n}$ is a subsequence of $\mathcal H_{- 1, a}$, so that $m$ and $n$ are consecutive terms in $\mathcal H_{-1, a}$. That is $m = H_j $ and $n = H_{j+1} $ for some $j\ge 1$.  Once again we have $n = m + H_{j-1}$ and $n = q_l m + r_l$, so $$H_{j-1} = (q_l - 1) H_j + r_l .$$ Because $a \ge 2$, $\mathcal H_{-1, a}$ is very close to being an increasing sequence and $H_{j} > H_{j-1}$ unless $j = 2$, so the methods used when $\mathcal H_{m,n}$ is a subsequence of $\mathcal H_{1,a}$ need modification. We solve the equation for $r_l$ to obtain 
$$r_l = H_{j-1} - (q_l - 1)H_j .$$ By Lemma \ref{lem:seq}, $H_j = aF_j - F_{j-1}$ and $H_{j-1} = aF_{j-1} - F_{j-2}$, where $F_i$ is the $i$th Fibonacci number. Then we can rewrite $r_l$ as 
\begin{align*}
r_l &= aF_{j-1} - F_{j-2} - (q_l - 1)(aF_j - F_{j-1}) \\
&=  aF_{j-1} - F_{j-2} - (q_l - 1)(a(F_{j-1}+F_{j-2}) - F_{j-1}) \\
& = (a(2-q_l) +q_l - 1) F_{j-1} - (a(q_l-1) +1)F_{j-2} 
\end{align*}

Because $r_l > 0$ and $(a(q_l-1) +1)F_{j-2} \ge 0$, we have, at the very least, that $a(2-q_l) +q_l - 1 >0$. Then $$q_l - 1>a(q_l -2).$$
We also know that $a \ge 2$, so $$a(q_l - 2) \ge 2q_l -4.$$ Combining these two inequalities, we find that $ q_l < 3$, so $q_l \in \{1, 2\}$.  If $q_l = 2$, then $r_l = H_{j-1} - H_j \le 1$, with equality if and only if $j = 2$. Hence $j = 2$ and $l = 0$, which means only $q_0$ can be 2. If $q_l = 1$, then $r_l = H_{j-1}$. As above, we can iterate this process so that, for $1\le i \le j-2$, at the $i$th step we have $$H_{j-i} = (q_{l-i+1}-1) H_{j-i+1} + r_{l-i+1}. $$ Solving for $r_{l-i+1}$, using Lemma \ref{lem:seq} and rearranging, we have $$r_{l-i+1} = (a(2-q_{l-i+1}) +q_{l-i+1} - 1) F_{j-i} - (a(q_{l-i+1}-1) +1)F_{j-i+1}  .$$

Similarly to above, we find that the coefficient of $F_{j-i}$, $a(2-q_{l-i+1}) +q_{l-i+1} -1$, must be positive so that $q_{l-i+1} \in \{1,2\}$.  If $q_{l-i+1} = 2$, then $$r_{l-i+1} = H_{j-i} - H_{j-i+1},$$ which can only be positive if $j = i+1$, and in that case $r_{l-i+1} = 1 = r_0$, so $q_{l-i+1} = q_0 = 2$. If $q_{l-i+1} = 1$, then $r_{l-i+1} = H_{j-i}$, and we consider the next equation. To summarize, for each $i$ with $1 \le i \le j-2$, either the process terminates with $q_0 = 2$, or $q_{l-i+1} = 1$ and there are more equations to consider in the Euclidean algorithm for $m$ and $n$. Up until this point, each step has been dependent on the fact that the Fibonacci sequence increases, but $F_1 = F_2$, so we consider the case $i = j-1$ separately.

Suppose $i = j-1$, so the equation becomes $$H_{1} = (q_{l-j+2}-1) H_{2} + r_{l-j+2}, $$ which we can rewrite as $$r_{l-j+2} = a - (q_{l-j+2}-1) (a-1).$$
Because $r_{l-j+2}$ is at least 1, $q_{l-j+2} \in \{1,2\}$.  If $q_{l-j+2} = 2$, then $r_{l-j +2} = 1  = r_0$ so $l = j-2$ and $q_{l-j+2} = 2 = q_0$. If $q_{l-j+2} = 1$, then $r_{l-j+2} = H_1$. This means that the relevant equation of the Euclidean algorithm is $$ 2a-1 = (a-1) + a.$$  While this equation does hold, it cannot be an equation in the Euclidean algorithm for $m$ and $n$ because $a > a-1$. Hence $q_{l-j+2} = 2 = q_0$, and the process terminates. Thus the sequence  $\{q_l, \ldots q_1, q_0 \}$ satisfies $q_i =1$ for $1\le i \le l$ and $q_0 = 2$.

\end{proof}

\begin{lemma}\label{lem:maxpairs}
Suppose $1<  m < n$ and $\gcd(m,n) = 1$. Then $(m,n)$ is a maximal pair if and only if any two consecutive terms of $\mathcal H_{m,n}$ form a maximal pair.
\end{lemma}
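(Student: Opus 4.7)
The plan is to derive this lemma as an essentially immediate corollary of Lemma \ref{lem:subseq}, which characterizes maximal pairs as exactly those $(m,n)$ for which $\mathcal H_{m,n}$ sits as a subsequence inside some $(\pm 1,a)$-Horadam sequence. The key observation is that this ``sitting inside'' property is inherited by every tail of the sequence.

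The reverse direction is immediate: if every pair of consecutive terms of $\mathcal H_{m,n}$ is maximal, then in particular so is $(H_0, H_1) = (m,n)$. For the forward direction, assume $(m,n)$ is maximal. By Lemma \ref{lem:subseq}, $\mathcal H_{m,n}$ is a subsequence of $\mathcal H_{\pm 1, a}$ for some integer $a \ge 2$. Fix $k \ge 0$. Since the recursion $H_{j+2} = H_{j+1} + H_j$ is determined by any two consecutive terms, the Horadam sequence $\mathcal H_{H_k, H_{k+1}}$ coincides with the tail $\{H_k, H_{k+1}, H_{k+2}, \ldots\}$ of $\mathcal H_{m,n}$, and is therefore also a subsequence of $\mathcal H_{\pm 1, a}$.

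To apply Lemma \ref{lem:subseq} in the reverse direction to the pair $(H_k, H_{k+1})$, one must verify its hypotheses: $1 < H_k < H_{k+1}$ and $\gcd(H_k, H_{k+1}) = 1$. The strict inequality $H_k < H_{k+1}$ follows from $H_{k+1} - H_k = H_{k-1} > 0$ for $k \ge 1$, and from the standing assumption $m < n$ when $k = 0$. The bound $H_k \ge 2$ follows from $H_0 = m \ge 2$ together with the fact that the sequence is nondecreasing on positive inputs. Coprimality is the standard Euclidean observation
\[
\gcd(H_{k+1}, H_k) \;=\; \gcd(H_k + H_{k-1}, H_k) \;=\; \gcd(H_{k-1}, H_k),
\]
which iterates down to $\gcd(H_0, H_1) = \gcd(m,n) = 1$. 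Hence Lemma \ref{lem:subseq} applies and yields that $(H_k, H_{k+1})$ is a maximal pair.

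There is no genuine obstacle here; the only mild subtlety is checking that the hypotheses of Lemma \ref{lem:subseq} really do transfer to every consecutive pair, but as indicated above this reduces to the monotonicity of $\mathcal H_{m,n}$ for positive seeds and the standard Fibonacci-style coprimality argument.
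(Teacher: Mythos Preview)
Your argument is correct. It differs from the paper's proof in a meaningful way, though. The paper argues directly with the Euclidean algorithm: for $j\ge 1$ the first $j$ steps of the Euclidean algorithm for $(H_j,H_{j+1})$ are exactly the recursion $H_{i+1}=H_i+H_{i-1}$, each contributing a quotient of $1$, after which the algorithm lands on the pair $(m,n)$ and continues with precisely the Euclidean algorithm for $(m,n)$. Thus the quotient sequence for $(H_j,H_{j+1})$ is the quotient sequence for $(m,n)$ with $j$ extra $1$'s prepended, and the maximal-pair condition transfers in both directions at once. Your route instead treats Lemma~\ref{lem:subseq} as the working characterization of ``maximal pair'' and observes that the tail $\mathcal H_{H_k,H_{k+1}}$ of $\mathcal H_{m,n}$ inherits the property of sitting inside $\mathcal H_{\pm 1,a}$. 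The paper's approach is more self-contained and yields the explicit relationship between the two quotient sequences; yours is shorter and more conceptual but leans on the full strength of Lemma~\ref{lem:subseq}, whose proof is substantially longer than the direct argument here.
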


\begin{proof}
Let $\{q_l, \ldots, q_0\}$ be the sequence of quotients for the Euclidean algorithm for $m$ and $n$.
Consider two consecutive terms $H_j$ and $H_{j+1}$ of $\mathcal H_{m,n}$ for $j \ge 1$. Because of the recursion relation for $\mathcal H_{m,n}$, we know that the $i$th equation of the Euclidean algorithm for $H_{j}$ and $H_{j+1}$ is $$H_{j+1 - i} = H_{j-i} + H_{j-1-i}$$ for $0 \le i \le j-1$, so the first $j$ terms in the sequence of quotients for $H_{j}$ and $H_{j+1}$ will be 1. If $i = j$, we have $H_2 = n + m$, and the next equation in the Euclidean algorithm for $H_j$ and $H_{j+1}$ is  $$ n = q_l m + r_l, $$
which begins the Euclidean algorithm for $m$ and $n$. Hence the sequence of quotients for $H_{j}$ and $H_{j+1}$ is 
 $\{q_{l + j+1}, \ldots, q_{l+1}, q_l \ldots, q_0\} = \{1, 1, \ldots, 1, q_l \ldots, q_0\} $. That is, $(m,n)$ is a maximal pair if and only if $(H_j, H_{j+1})$ is a maximal pair.

\end{proof}

Now we turn our attention to applying these lemmas in knot-theoretic contexts.

\section{Knot Types of Horadam Twisted Torus Knots}\label{HTTKs}

We call a twisted torus knot a \textit{Horadam twisted torus knot} when the parameters $\{p,q,r\}$ are consecutive members of a Horadam sequence.
In this section, we consider the knot types of Horadam twisted torus knots with parameters in $(m,n; 1,1)$ Horadam sequences.

There are twelve ways to arrange three consecutive terms of a Horadam sequence as parameters of $K(p,q,r,\pm 1)$, but Lemma \ref{lem:lee} indicates that only six of these types could be distinct knot types. Here, we discuss the six types of twisted torus knots that have Horadam parameters: \begin{enumerate}
\item[]
\item \label{type:bigr} $K(H_{k+2}, H_k, H_{k+1}, -1)$, 
\item \label{type:bigrpos} $K(H_{k+2}, H_k, H_{k+1}, +1)$, 
\item \label{type:bigq} $K(H_{k+2}, H_{k+1}, H_{k}, -1)$,
\item \label{type:bigqpos} $K(H_{k+2}, H_{k+1}, H_{k}, +1)$,
\item \label{type:biggerr}$K(H_{k+1}, H_k, H_{k+2}, -1)$, and
\item \label{type:biggerrpos}$K(H_{k+1}, H_k, H_{k+2}, +1)$.
\end{enumerate} 

In this section, we notice that all of the $(m,n;1,1)$ Horadam twisted torus knots are primitive/primitive. Next, Lemma \ref{lem:positivesarethesame} will tell us that Types \ref{type:bigrpos} and \ref{type:bigqpos} are the same. Further, Lemma \ref{lem:makepthebigone} tells us that the knot type of Type \ref{type:biggerr} is the mirror of Type \ref{type:bigqpos}, and Lemma \ref{lem:type6istype3} tells us that each Type \ref{type:biggerrpos} knot is isotopic to a Type \ref{type:bigq} with shifted index. We will consider Types \ref{type:bigr}, \ref{type:bigq}, and \ref{type:biggerr} in Proposition \ref{prop:knottypes}. 

\begin{lemma}\label{lem:primprim}
All $(m,n;1,1)$-Horadam twisted torus knots are primitive/primitive.
\end{lemma}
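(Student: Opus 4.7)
The plan is to invoke Dean's criterion from \cite{DeanSFSSurgeries} characterizing when a twisted torus knot $K(p,q,r,s)$ is primitive/primitive on the standard genus-2 Heegaard surface of $S^3$, and verify it for each of the six parameter arrangements listed before the lemma. Dean's criterion takes the form of arithmetic conditions (coprimality relations and congruences) on $(p,q,r,s)$, one set giving primitivity in $H$ and another giving primitivity in $H'$.

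For each of the six types, the parameters are three consecutive terms $H_k, H_{k+1}, H_{k+2}$ of the $(m,n;1,1)$-Horadam sequence together with twist $s = \pm 1$. The key tool is the Horadam recursion
\[
H_{k+2} = H_{k+1} + H_k,
\]
which immediately yields congruences such as $H_{k+2} \equiv H_{k+1} \pmod{H_k}$, $H_{k+2} \equiv H_k \pmod{H_{k+1}}$, and $H_k \equiv -H_{k+1} \pmod{H_{k+2}}$. Together with the coprimality fact $\gcd(H_k, H_{k+1}) = 1$ for all $k \ge 0$, which follows by an easy induction from $\gcd(m,n) = 1$ using the Euclidean algorithm along the recursion, these congruences are precisely what Dean's criterion demands. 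For Types (5) and (6), where $r = H_{k+2} = H_{k+1} + H_k = p+q$ falls in the boundary case $r = p+q$, I would use the description of $K(p,q,p+q,s)$ from Figure \ref{fig:ttkpplusq} to read off the relevant word in $\pi_1$ of each handlebody before applying the criterion; this keeps the case analysis uniform.

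The main obstacle is purely bookkeeping: Dean's conditions are not symmetric in $(p,q,r)$, so each of the six parameter arrangements, and each choice of sign for $s$, requires its own check. I would organize the argument by grouping the three pairs of twists together (Types (1)--(2), then (3)--(4), then (5)--(6)) and, within each pair, identifying which congruence from the recursion yields the required divisibility relation on each handlebody. With this organization the proof reduces to a short table of six direct verifications, none of which requires anything beyond the Horadam recursion and the coprimality $\gcd(H_k,H_{k+1})=1$.
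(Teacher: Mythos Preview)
Your approach is correct and is essentially the same as the paper's: both invoke Dean's criterion that $K(p,q,r,\pm 1)$ is primitive with respect to $H$ iff $r\equiv\pm 1$ or $\pm q\pmod p$, and primitive with respect to $H'$ iff $r\equiv\pm 1$ or $\pm p\pmod q$.

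The paper, however, dispatches all six types at once with a single observation you have not made: in every one of the six arrangements, the recursion $H_{k+2}=H_{k+1}+H_k$ forces $r=p\pm q$. (For Types~\ref{type:bigr}--\ref{type:bigqpos} one has $r=p-q$; for Types~\ref{type:biggerr}--\ref{type:biggerrpos} one has $r=p+q$.) From $r=p\pm q$ both congruences $r\equiv\pm q\pmod p$ and $r\equiv p\pmod q$ are immediate, so no six-entry table is needed. Your proposed detour through $\pi_1$ of the handlebodies for the $r=p+q$ case is also unnecessary: Dean's criterion already applies over the full range $2\le r\le p+q$, so Types~\ref{type:biggerr} and~\ref{type:biggerrpos} require no special treatment.
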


\begin{proof}
Dean's Theorem 3.4 \cite{DeanSFSSurgeries} shows that a twisted torus knot of the form $K(p, q, r, \pm1)$ is primitive with respect to $H$ exactly when $r \equiv \pm 1$ or $\pm q \pmod p$ and primitive with respect to $H'$ exactly when $r \equiv \pm 1$ or $\pm p \pmod q$. Since every $(m,n;1,1)$ Horadam twisted torus knot has $r = p \pm q$, we have $r \equiv \pm q \pmod p$ and $r \equiv p \pmod q$. 
\end{proof}

\begin{lemma}\label{lem:positivesarethesame}
Let $p, q \ge 2$ be coprime integers with $p>2q$. Then $K(p,q,p-q,+1)$ is isotopic to $K(p, p-q, q, +1)$.
\end{lemma}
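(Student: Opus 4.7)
The plan is to prove the isotopy by manipulating the braid representations of both knots. Write $T := \sigma_{p-1}\sigma_{p-2}\cdots\sigma_1 \in B_p$, and let $\Delta_k^2 := (\sigma_{k-1}\cdots\sigma_1)^k$ denote the full twist on the first $k$ strands. In the braid-word convention used in the paper, the two twisted torus knots are closures of braids
\[
B_1 = T^q \cdot \Delta_{p-q}^2 \quad \text{and} \quad B_2 = T^{p-q}\cdot \Delta_q^2
\]
on $p$ strands, where the $+1$ parameter corresponds to one full twist on the specified strand subgroup. As a sanity check, I would first verify that both braids have matching exponent sum, which reduces to the elementary identity $q(p-1) + (p-q)(p-q-1) = (p-q)(p-1) + q(q-1)$.

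For the main argument, I would aim to exhibit a braid $g \in B_p$ with $g\, B_1\, g^{-1} = B_2$, since conjugation preserves the closure. The key algebraic tool is the centrality of $T^p = \Delta_p^2 \in B_p$, which gives the identity $T^q = T^{q-p}\cdot \Delta_p^2$: this trades a positive $q$-fold torus factor for a negative $(p-q)$-fold one plus a central full twist. Combined with the classical decomposition of $\Delta_p^2$ as a product of $\Delta_q^2$, $\Delta_{p-q}^2$, and a linking element coupling the two complementary sub-braid-groups on strands $\{1,\ldots,q\}$ and $\{q+1,\ldots,p\}$, this should convert $B_1$ into a conjugate of $B_2$, giving $\widehat{B_1} = \widehat{B_2}$. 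Cyclic permutation of the braid word provides the additional flexibility to move the full-twist factors past the torus factors as needed.

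An alternative, more geometric route is that both knots sit as primitive/primitive curves on the standard genus-$2$ Heegaard surface $\Sigma$ of $S^3$ (by Lemma \ref{lem:primprim}). Under the involution of $\Sigma$ that interchanges the two complementary strand bundles (of sizes $q$ and $p-q$) together with the substitution $q \leftrightarrow p-q$ in the torus-knot parameters, the curve representing $K(p,q,p-q,+1)$ is sent to the curve representing $K(p,p-q,q,+1)$, and this involution is ambient-isotopic to the identity in $S^3$. The hard part in either approach is executing the details: the braid route requires careful bookkeeping of many braid relations together with the nontrivial decomposition of $\Delta_p^2$, while the geometric route requires explicitly identifying the involution of $\Sigma$ and verifying that it extends to an $S^3$-isotopy. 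The hypothesis $p > 2q$ is essential because it ensures $p-q > q \ge 2$, so the two complementary strand bundles are genuinely distinct and both $B_1$ and $B_2$ lie in the simple "$r \le p$" braid-word regime.
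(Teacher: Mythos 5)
Your setup is correct (the braids $B_1=T^q\Delta_{p-q}^2$ and $B_2=T^{p-q}\Delta_q^2$ do represent the two knots in the paper's convention, and the exponent-sum check is fine), but the proposal stops exactly where the proof has to begin. The heart of your braid route is the claim that $B_1$ and $B_2$ are conjugate in $B_p$, and you never produce a conjugating element. The identities you invoke are true --- $T^p=\Delta_p^2$ is central, $T^q=T^{q-p}\Delta_p^2$, and $\Delta_p^2$ factors as the product of the full twist on the first $p-q$ strands, the full twist $\widetilde\Delta_q^2$ on the last $q$ strands, and a commuting block-linking element $L$ --- but feeding them into $B_1$ only gets you, up to cyclic permutation, a word such as $\Delta_{p-q}^4\,\widetilde\Delta_q^2\,L\,T^{q-p}$, which is not visibly conjugate to $T^{p-q}\Delta_q^2$; ``this should convert $B_1$ into a conjugate of $B_2$'' is a hope, not an argument. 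Moreover, conjugacy in $B_p$ is strictly stronger than equality of closures, and it is not at all clear that it holds: the paper's own proof does not stay inside $B_p$. It rewrites $T^q$ in block form, slides the two full twists past the block crossing, then uses the closure to pull $q$ strands out (a destabilization-type move landing on the $(p-q)$-strand braid $(\sigma_{p-q-1}\cdots\sigma_1)^{p}(\sigma_{q-1}\cdots\sigma_1)^{q}$, i.e.\ $K(p-q,p,q,+1)$), and finally invokes Lemma \ref{lem:lee} to exchange the first two parameters. So either you prove the conjugacy outright (the exponent-sum computation is nowhere near sufficient) or you allow Markov/closure moves --- and in both cases the bookkeeping you defer \emph{is} the proof.

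The geometric alternative has the same defect: asserting that there is an involution of the genus-$2$ surface interchanging the two strand bundles, that it carries $K(p,q,p-q,+1)$ to $K(p,p-q,q,+1)$, and that it is ambient-isotopic to the identity in $S^3$, is a restatement of the lemma rather than a proof; no such involution is constructed, and Lemma \ref{lem:primprim} plays no role in producing one. Finally, the hypothesis $p>2q$ does more than make the two bundles ``distinct'': in the actual argument it is what lets the full twist on $q$ strands slide through the regrouping of the $p$ strands into blocks of sizes $p-q$ and $q$ (one needs $q<p-q$, so that the $q$-strand block fits inside the $(p-q)$-strand block and $p-2q\ge 1$ strands remain over which the resulting full twist is pushed), which is the step your sketch never reaches.
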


\begin{proof}
Suppose $p, q \ge 2$, $\gcd(p,q) = 1$, and $p >2q$. The knot $K(p,q,p-q,+1)$ can be visualized as the closure of the braid $(\sigma_{p-1} \cdots \sigma_1)^q (\sigma_{p-q-1} \cdots \sigma_1)^{p-q}$. We can rewrite $(\sigma_{p-1} \cdots \sigma_1)^q$ as a full twist on the rightmost $q$ strands followed by those $q$ strands passing over the remaining $p - q$ strands, as shown in Figure \ref{fig:lemma7.1}. As above, the switch indicates that $p$ strands both enter and leave the switch, but that the strands of the braid may change from one grouping to another in the switch. In this figure, we have chosen to think of the full twists on strands as a link surgery diagram for the convenience of visualizing changes to the diagram. 

In this particular diagram, for example, we can see that the link component around the $p-q$ strands at the bottom can be shifted to appear around the $p-q$ strands at the top of the diagram, as in Figure \ref{fig:lemma7.2}. Additionally, in Figure \ref{fig:lemma7.2}, the link component around the rightmost q strands at the top can be shifted to the bottom. From here, we see that because $p-q >q$, the link component surrounding the $q$ strands at the bottom can slide through the switch to create the diagram shown in Figure \ref{fig:lemma7.3}. Now, thinking of the closure of this braid, the $q$ strands on the right at the top and the $q$ strands on the right at the bottom can be pulled out, creating a positive full twist to the rightmost $q$ strands at the bottom, as shown in Figure \ref{fig:lemma7.4} 

From here, we use the fact that a full twist on the rightmost $q$ strands followed by the rightmost $q$ strands passing over the leftmost $p-2q$ strands amounts to $q$ single twists (i.e, $q$ copies of $\sigma_{p-q-1} \cdots \sigma_1$) on all $p-q$ strands. Then Figure \ref{fig:lemma7.4} becomes the closure of $(\sigma_{p-q-1} \cdots \sigma_1)^{p} (\sigma_{q-1} \cdots \sigma_1)^{q}$, which is also the braid whose closure is $K(p-q,p,q,+1)$. By Lemma \ref{lem:lee}, this knot is isotopic to $K(p, p-q, q, +1)$.
\end{proof}

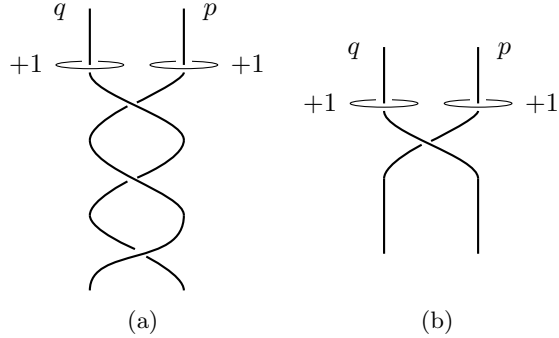
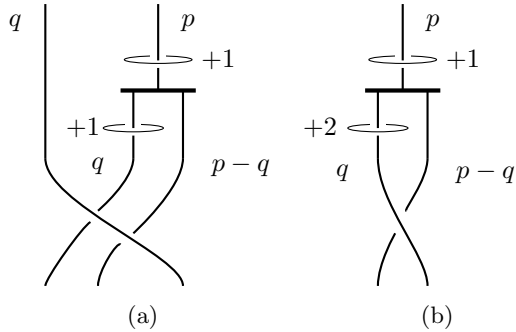
\begin{figure}[ht]
	\begin{subfigure}{.3\textwidth}
		\begin{tikzpicture}
			\draw (2,1.01) ellipse (.45 and 0.05);
			\fill[white] (1.9,1) rectangle (2.1, 1.2);
			\draw[thick] (2,1) -- (2, 1.75); 
			\draw[thick] (1,1.75) -- (1, 1);
			\draw[thick] (1,1) .. controls (1,.6)and (1.4,-.1) .. (1.45,-.1);
			\draw[thick] (2,0.9) .. controls(2,0) and (1,-.5) .. (1,-1);
			\draw[thick] (1.55,-.25) .. controls (1.7,-.5) and (2,-.8) .. (2,-1);
			\draw[ultra thick] (.75,-1) -- (2.25, -1);
			\draw (1,-1.73) ellipse (.45 and 0.05);
			\fill[white] (0.9,-1.72) rectangle (1.1, -1.65);
			\draw[thick] (1,-1) -- (1, -1.72); 
			\draw[thick] (1,-1.85) -- (1, -2.2); 
			\draw[thick] (2,-1) -- (2, -2.2); 
			\node at (0.2,1.7) {$p-q$}; 
			\node at (2.4,1.7) {$q$}; 
			\node at (0.2,-1.25) {$p-q$}; 
			\node at (2.4,-1.25) {$q$}; 
			\node at (2.8,1) {$-1$};  
			\node at (0.15,-1.7) {$-1$};  
		\end{tikzpicture}\caption[b]{}\label{fig:lemma7.1}
	\end{subfigure}
	\begin{subfigure}{.3\textwidth}
		\begin{tikzpicture}
			\draw (2,1.01) ellipse (.45 and 0.05);
			\fill[white] (1.9,1) rectangle (2.1, 1.2);
			\draw[thick] (2,1) -- (2, 1.75); 
			\draw[thick] (3,1.75) -- (3, 1);
			\draw[thick] (2,0.9) .. controls (2,.6)and (2.4,-.1) .. (2.45,-.1);
			\draw[thick] (3,1) .. controls(3,0) and (2,-.5) .. (2,-1);
			\draw[thick] (2.55,-.25) .. controls (2.7,-.5) and (3,-.8) .. (3,-1);
			\draw[ultra thick] (1.75,-1) -- (3.25, -1);
			\draw (3,-1.73) ellipse (.45 and 0.05);
			\fill[white] (2.9,-1.72) rectangle (3.1, -1.65);
			\draw[thick] (3,-1) -- (3, -1.72); 
			\draw[thick] (3,-1.85) -- (3, -2.2); 
			\draw[thick] (2,-1) -- (2, -2.2); 
			\node at (1.2,1.7) {$p-q$}; 
			\node at (3.4,1.7) {$q$}; 
			\node at (1.2,-1.25) {$p-q$}; 
			\node at (3.4,-1.25) {$q$}; 
			\node at (1.2,1) {$-1$};  
			\node at (3.75,-1.7) {$-1$};  
		\end{tikzpicture}\caption{}\label{fig:lemma7.2}
	\end{subfigure}\caption{Braids representing $K(p,q,p-q, +1)$}
\end{figure}
\begin{figure}
	\begin{subfigure}{.3\textwidth}
		\begin{tikzpicture}
			\draw (0,1.01) ellipse (.45 and 0.05);
			\fill[white] (-.1,1) rectangle (.1, 1.2);
			\draw[thick] (1.25,-.3) -- (1.25, 1.75); 
			\draw[thick] (0,1.75) -- (0, 1.);
			\draw[thick] (0, 0.9) -- (0, .6);
			\draw[ultra thick] (-.5,.6) -- (.5, .6);	
			\draw (.33,-.1) ellipse (.4 and 0.05);
			\fill[white] (.23,-.1) rectangle (.43, .2);
			\draw[thick] (-.33, 0.6) -- (-.33, -.3);	
			\draw[thick] (.33, 0.6) -- (.33, -0.1);	
			\draw[thick] (.33,-.2) -- (.33, -.3); 
			\draw[thick] (1.25,-.3) .. controls(1.25,-1.3) and (-.33,-1.2) .. (-.33,-2.3);
			\draw[thick] (-.33,-.3) .. controls (-.33,-.9)and (.3,-1.2) .. (.3,-1.2);
			\draw[thick] (0.33,-.3) .. controls (.33,-.7) and (.8,-.9) .. (.8,-.9);
			\draw[thick] (.95,-1.1) .. controls(1.2,-1.3) and (1.25,-2.3) .. (1.25,-2.3);
			\draw[thick] (.5,-1.4) .. controls (.6,-1.5) and (.66,-2.3) .. (.6,-2.3);
			\node at (-.8,1.7) {$p-q$}; 
			\node at (1.6,1.5) {$q$}; 
			\node at (-.9,.25) {$p-2q$}; 
			\node at (.7,0.25) {$q$}; 
			\node at (-.75,1) {$-1$};  
			\node at (.8,-0.25) {$-1$};  
		\end{tikzpicture}\caption{}\label{fig:lemma7.3}	
		\end{subfigure}
		\begin{subfigure}{.3\textwidth}
		\begin{tikzpicture}
			\draw (1,1.01) ellipse (.45 and 0.05);
			\fill[white] (0.9,1) rectangle (1.1, 1.2);
			\draw[thick] (1,1) -- (1, 1.75); 
			\draw[thick] (1,0.9) -- (1, .5);
			\draw[ultra thick] (.25,.5) -- (1.75, .5);
			\draw (1.35,-.3) ellipse (.4 and 0.05);
			\fill[white] (1.25,-.3) rectangle (1.45, 0.1);
			\draw[thick] (1.35,.5) -- (1.35, -.3); 
			\draw[thick] (1.35,-.4) -- (1.35, -.55); 
			\draw[thick] (.65,0.5) -- (.65, -.55); 
			\draw[thick] (1.35,-.55) .. controls(1.35,-.85) and (.65,-1.3) .. (.65,-1.5);
			\draw[thick] (.65,-.55) .. controls(.65,-.75) and (1,-1) .. (.95,-1);
			\draw[thick] (1.05,-1.1) .. controls(1.05,-1.1) and (1.35,-1.4) .. (1.35,-1.5);
			\draw[thick] (.65, -1.5) -- (.65, -2.25);
			\draw[thick] (1.35, -1.5) -- (1.35, -2.25);
			\node at (0.2,1.5) {$p-q$}; 
			\node at (1.7,.1) {$q$}; 
			\node at (-.1,.1) {$p-2q$}; 
			\node at (1.75,1) {$-1$};  
			\node at (1.9,-.5) {$-2$};  
		\end{tikzpicture}\caption{}\label{fig:lemma7.4}
	\end{subfigure}\caption{Braids representing $K(p,q,p-q, +1)$}

\end{figure}

\begin{lemma}\label{lem:makepthebigone}
Let $p, q\ge 2$ be coprime integers. If $p >q$, $K(p,q,p+q, -1)$ is the mirror image of $K(p, p+q, q, +1)$.
\end{lemma}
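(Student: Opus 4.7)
The plan is to mirror the strategy of the proof of Lemma \ref{lem:positivesarethesame}: represent each knot as the closure of a braid on $p+q$ strands (treating full twists as link-surgery components), and then transform the link-surgery diagram of one into the other via ambient isotopies in $S^3$.

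First, by Figure \ref{fig:r=p+qbraid} with $n=-1$, $K(p,q,p+q,-1)$ is the closure on $p+q$ strands of a braid with two parts: a $-1$ full twist on all $p+q$ strands (rendered as a $-1$-framed unknot encircling every strand), followed by the ``switch'' in which the leftmost $q$ strands pass as a bundle under the rightmost $p$ strands. Second, by Lemma \ref{lem:lee}, $K(p,p+q,q,+1) = K(p+q,p,q,+1)$, which is the closure on $p+q$ strands of the braid $(\sigma_{p+q-1}\cdots\sigma_1)^{p}(\sigma_{q-1}\cdots\sigma_1)^{q}$; its mirror image has every crossing reversed, and I would likewise render it as a link-surgery diagram.

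To establish that these closures agree, I would slide the $-1$-framed unknot through the switch, using the same sort of handle-slide and Rolfsen-twist moves that take Figure \ref{fig:lemma7.1} to Figure \ref{fig:lemma7.4}. The framed unknot around all $p+q$ strands should split into a $-1$-framed component around the $p$-strand bundle, a $-1$-framed component around the $q$-strand bundle, and a linking twist between the two bundles. The linking twist then combines with the $pq$ crossings of the switch, and after simplification the component on the $p$-strand side reproduces the mirror of the $(\sigma_{p+q-1}\cdots\sigma_1)^{p}$ torus factor, while the component on the $q$ strands yields the mirror of $(\sigma_{q-1}\cdots\sigma_1)^{q}$.

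The main obstacle is carrying out the slides and the attendant bookkeeping rigorously. The situation is more delicate than in Lemma \ref{lem:positivesarethesame} because here the framed component encircles \emph{all} $p+q$ strands (the case $r=p+q$ rather than a proper sub-bundle), so one must carefully track how the full twist distributes across the switch and verify that the resulting diagram is exactly the mirror of the target braid rather than some other $(\sigma_i\leftrightarrow\sigma_i^{-1})$ variant.
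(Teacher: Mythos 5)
Your proposal is correct and follows essentially the same route as the paper's proof: the paper likewise starts from the Figure \ref{fig:r=p+qbraid} picture, splits the negative full twist on all $p+q$ strands into full twists on the $p$- and $q$-bundles plus two inter-bundle crossings, cancels one of those against the switch, and slides the surgery components — the only difference is cosmetic, in that the paper finishes by untwisting through the braid closure down to the $p$-strand braid $(\sigma_{p-1}\cdots\sigma_1)^{p+q}(\sigma_{p-1}\cdots\sigma_{p-q+1})^q$ and viewing it from behind the page, while you stay on $p+q$ strands and invoke Lemma \ref{lem:lee}. (One minor caution: with the Rolfsen-twist convention used in the paper's figures, a $-1$ full twist is encoded by a $+1$-framed circle, as in Figure \ref{fig:lemma8.1}, not a $-1$-framed one.)
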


\begin{proof}
Suppose $p, q\ge 2$ are coprime integers with $p >q$. Using Figure \ref{fig:r=p+qbraid}, we can think of the knot $K(p,q,p+q, -1)$ as the closure of the braid with one full negative twist on all $p+q$ strands followed by the rightmost $p$ strands passing over the leftmost $q$ strands. This braid has both positive and negative crossings. We use the fact that a full negative twist on all $p+q$ strands can be considered instead as a full twist on the rightmost $p$ strands, a full twist on leftmost $q$ strands and then the rightmost $p$ strands passing under and then over the leftmost $q$ strands. This results in the braid shown in Figure \ref{fig:lemma8.1}. The $p$ strands that pass over the $q$ strands twice can be straightened. Then we move the link components as needed to obtain the diagram in Figure \ref{fig:lemma8.2}. 

Now we see that the link component around the $q$ strands can be shifted to the bottom. Since $p>q$, we can push the link component upward, so that it surrounds the leftmost $q$ strands of the grouping of $p$ strands, as shown in Figure \ref{fig:lemma8.3}. Now we use the braid closure to untwist the leftmost $q$ strands, creating a full negative twist on those strands, to obtain the braid in Figure \ref{fig:lemma8.4}. By converting the negative full twist on $q$ strands followed by the leftmost $q$ strands going over the rightmost $p-q$ strands into $q$ single twists on all $p$ strands, we can see this knot as the closure of the braid $(\sigma_{p-1} \cdots \sigma_1)^{p+q} (\sigma_{p-1} \cdots \sigma_{p-q+1})^q$. By viewing this braid from behind the page, we can see a braid whose closure is the mirror image of $K(p, p+q, q, +1)$.
\end{proof}

\begin{figure}[ht]
	\begin{subfigure}{.3\textwidth}\hspace{1.5in}
		\begin{tikzpicture}
			\draw (2,1) ellipse (.45 and 0.05);
			\draw (3.25,1) ellipse (.45 and 0.05);
			\fill[white] (1.9,1) rectangle (2.1, 1.2);
			\fill[white] (3.15,1) rectangle (3.35, 1.2);
			\draw[thick] (2,1) -- (2, 1.75); 
			\draw[thick] (3.25,1) -- (3.25, 1.75); 
			\draw[thick] (2,.9) .. controls (2,.6)and (3.25,.3) .. (3.25,0);
			\draw[thick] (3.25,0.9) .. controls(3.25,.8) and (2.625,.5) .. (2.625,.5);
			\draw[thick] (2,0) .. controls (2,-.4)and (3.25,-.7) .. (3.25,-1);	
			\draw[thick] (2.5,0.45) .. controls(2.5,.45) and (2,.2) .. (2,0);
			\draw[thick] (2,-2) .. controls (2,-1.4)and (3.25,-1.7) .. (3.25,-1);
			\draw[thick] (3.25,0) .. controls(3.25,-.25) and (2.63,-.5) .. (2.63,-.5);
			\draw[thick] (2.5,-.55) .. controls(2.5,-.55) and (2,-.8) .. (2,-1);
			\draw[thick] (2,-1) .. controls(2,-1.2) and (2.6,-1.45) .. (2.6,-1.45);
			\draw[thick] (2.75,-1.55) .. controls(2.75,-1.55) and (3.25,-1.8) .. (3.25,-2);
			\node at (3.6,1.7) {$p$}; 
			\node at (1.6 ,1.7) {$q$}; 
			\node at (4.1,1) {$+1$};  
			\node at (1.15,1) {$+1$};  
		\end{tikzpicture}\caption[b]{}\label{fig:lemma8.1}
	\end{subfigure}
	\begin{subfigure}{.3\textwidth}
		\begin{tikzpicture}
			\draw (2,1) ellipse (.45 and 0.05);
			\draw (3.25,1) ellipse (.45 and 0.05);
			\fill[white] (1.9,1) rectangle (2.1, 1.2);
			\fill[white] (3.15,1) rectangle (3.35, 1.2);
			\draw[thick] (2,1) -- (2, 1.75); 
			\draw[thick] (3.25,1) -- (3.25, 1.75); 
			\draw[thick] (2,.9) .. controls (2,.6)and (3.25,.3) .. (3.25,0);
			\draw[thick] (3.25,0.9) .. controls(3.25,.8) and (2.625,.5) .. (2.625,.5);
			\draw[thick] (2.5,0.45) .. controls(2.5,.45) and (2,.2) .. (2,0);
			\draw[thick] (2,0) -- (2, -1); 
			\draw[thick] (3.25,0) -- (3.25, -1.); 
			\node at (3.6,1.7) {$p$}; 
			\node at (1.6 ,1.7) {$q$}; 
			\node at (4.1,1) {$+1$};  
			\node at (1.15,1) {$+1$};  
			\fill[white] (2,-1) rectangle (3, -1.5);
		\end{tikzpicture}\caption{}\label{fig:lemma8.2}
	\end{subfigure}\caption{Braids representing $K(p,q,p-q, +1)$}
\end{figure}

\begin{figure}
	\begin{subfigure}{.3\textwidth}
		\begin{tikzpicture}
			\draw (0,1.01) ellipse (.45 and 0.05);
			\fill[white] (-.1,1) rectangle (.1, 1.2);
			\draw[thick] (-1.5,-.3) -- (-1.5, 1.75); 
			\draw[thick] (0,1.75) -- (0, 1.);
			\draw[thick] (0, 0.9) -- (0, .6);
			\draw[ultra thick] (-.5,.6) -- (.5, .6);	
			\draw (-.33,.1) ellipse (.4 and 0.05);
			\fill[white] (-.23,.1) rectangle (-.43, .2);
			\draw[thick] (.33, 0.6) -- (.33, -.3);	
			\draw[thick] (-.33, 0.6) -- (-.33, 0.1);	
			\draw[thick] (-.33,0) -- (-.33, -.3); 
			\draw[thick] (-1.5,-.3) .. controls(-1.5,-.9) and (.33,-1.6) .. (.33,-2);
			\draw[thick] (-.33,-.3) .. controls (-.33,-.6)and (-.8,-1.) .. (-.8,-1.);
			\draw[thick] (0.33,-.3) .. controls (.33,-.7) and (-.33,-1.3) .. (-.33,-1.3);
			\draw[thick] (-.9,-1.15) .. controls(-1.,-1.2) and (-1.5,-1.9) .. (-1.5,-2);
			\draw[thick] (-.5,-1.45) .. controls (-.54,-1.4) and (-.8,-1.8) .. (-.8,-2);
			\node at (1.1,-.4) {$p-q$}; 
			\node at (-.8,-.4) {$q$}; 
			\node at (.4,1.5) {$p$};
			\node at (-1.9,1.5) {$q$}; 
			\node at (-1., .1) {$+1$};  
			\node at (.8,1.01) {$+1$};  
		\end{tikzpicture}\caption{}\label{fig:lemma8.3}	
		\end{subfigure}
		\begin{subfigure}{.3\textwidth}
		\begin{tikzpicture}
			\draw (0,1.01) ellipse (.45 and 0.05);
			\fill[white] (-.1,1) rectangle (.1, 1.2);
			\draw[thick] (0,1.75) -- (0, 1.);
			\draw[thick] (0, 0.9) -- (0, .6);
			\draw[ultra thick] (-.5,.6) -- (.5, .6);	
			\draw (-.33,.1) ellipse (.4 and 0.05);
			\fill[white] (-.23,.1) rectangle (-.43, .2);
			\draw[thick] (.33, 0.6) -- (.33, -.3);	
			\draw[thick] (-.33, 0.6) -- (-.33, 0.1);	
			\draw[thick] (-.33,0) -- (-.33, -.3); 
			\draw[thick] (-.33,-.3) .. controls(-.33,-.9) and (.33,-1.6) .. (.33,-2);
			\draw[thick] (.33,-.3) .. controls (.33,-.6)and (0,-1.) .. (0.05,-1.);
			\draw[thick] (-.1,-1.3) .. controls(-.1,-1.3) and (-.33,-1.7) .. (-.33,-2);
			\node at (1.1,-.5) {$p-q$}; 
			\node at (-.8,-.5) {$q$}; 
			\node at (.4,1.5) {$p$};
			\node at (-1.1, .1) {$+2$};  
			\node at (.8,1.01) {$+1$};  
		\end{tikzpicture}\caption{}\label{fig:lemma8.4}
	\end{subfigure}\caption{Braids representing $K(p,q,p-q, +1)$}
\end{figure}
\vspace{.3in}

\begin{lemma}\label{lem:type6istype3}
Let $\mathcal H$ be the $(m,n)$-Horadam sequence where $m$ and $n$ are positive integers and $\gcd(m,n) = 1$. The twisted torus knots $K(H_{k+3}, H_{k+2}, H_{k+1}, -1)$ and $K(H_{k+1}, H_k, H_{k+2}, +1)$ are isotopic. 
\end{lemma}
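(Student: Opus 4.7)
My plan is to prove the isotopy by a sequence of braid and link-surgery manipulations, following the template of Lemmas \ref{lem:positivesarethesame} and \ref{lem:makepthebigone}. Begin with the left-hand knot as the closure of the braid
\[
(\sigma_{H_{k+3}-1}\cdots\sigma_1)^{H_{k+2}}\,(\sigma_{H_{k+1}-1}\cdots\sigma_1)^{-H_{k+1}}
\]
on $H_{k+3}$ strands. Exploiting the Horadam relation $H_{k+3}=H_{k+2}+H_{k+1}$, I rewrite the torus braid factor using the identity from the proof of Lemma \ref{lem:positivesarethesame}: a positive full twist on the rightmost $H_{k+2}$ strands followed by a switch in which those $H_{k+2}$ strands pass over the leftmost $H_{k+1}$ strands. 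Replacing each full twist by a link-surgery component, the picture now has a $+1$ loop around the top rightmost $H_{k+2}$ strands, a switch, and a $-1$ loop around the bottom leftmost $H_{k+1}$ strands.

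Next, I slide the $-1$ surgery loop upward through the switch; tracking the strands shows that after the slide this loop encircles a specific $H_{k+1}$-strand sub-bundle of the $H_{k+2}$-strand bundle carrying the $+1$ twist. Using the Horadam relation $H_{k+2}=H_{k+1}+H_k$, I decompose the $+1$ full twist on $H_{k+2}$ strands as a $+1$ full twist on that $H_{k+1}$ sub-bundle, a $+1$ full twist on the complementary $H_k$ sub-bundle, and an auxiliary surgery component linking the two sub-bundles. The $+1$ twist on the $H_{k+1}$ sub-bundle then cancels with the $-1$ twist, leaving only a $+1$ full twist on $H_k$ strands together with the linking component and the middle switch.

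Finally, I use further braid and surgery isotopies together with the braid closure to simplify the combined effect of the linking component and the switch, collapsing the picture onto $H_{k+2}=H_{k+1}+H_k$ strands and matching it with the $r=p+q$ presentation of $K(H_{k+1}, H_k, H_{k+2}, +1)$ from Figure \ref{fig:r=p+qbraid}; Lemma \ref{lem:lee} can be applied if needed to swap the roles of $p$ and $q$. The main obstacle, as in Lemmas \ref{lem:positivesarethesame} and \ref{lem:makepthebigone}, is the bookkeeping of which strands are encircled by each surgery component after each manipulation and verifying that the linking-plus-switch combination indeed collapses to a single torus-braid pattern on $H_{k+2}$ strands. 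This will require a chain of intermediate surgery diagrams, with strand groupings of sizes $H_k$, $H_{k+1}$, and $H_{k+2}$ tracked carefully through each move, analogous to but more elaborate than the sequences of figures accompanying the proofs of the previous two lemmas.
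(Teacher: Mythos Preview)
Your first three moves are the paper's moves: write the left-hand knot as a braid on $H_{k+3}$ strands, replace $(\sigma_{H_{k+3}-1}\cdots\sigma_1)^{H_{k+2}}$ by a full twist on the rightmost $H_{k+2}$ strands followed by those strands passing over the leftmost $H_{k+1}$, and slide the bottom surgery component up so that it encircles the $H_{k+1}$-strand sub-bundle inside the $H_{k+2}$ group. So far so good.

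The divergence is at your fourth step. You propose to decompose the full twist on the $H_{k+2}$ group into a twist on the $H_{k+1}$ sub-bundle, a twist on the $H_k$ sub-bundle, and a linking term, and then cancel the $H_{k+1}$ piece against the slid surgery loop. This is a legal move, but it is a detour: it introduces the linking term you then have to remove, and, more seriously, it does nothing to eliminate the far-left $H_{k+1}$ strands. Your final paragraph acknowledges that getting from there down to $H_{k+2}$ strands and matching Figure~\ref{fig:r=p+qbraid} still needs an unspecified ``chain of intermediate surgery diagrams''. That is where the actual content lies, and your outline does not supply it.

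The paper avoids all of this. It never decomposes the $H_{k+2}$ twist. After the slide (your step 3, the paper's Figure~\ref{fig:lemma9.2}), the only interaction of the far-left $H_{k+1}$ strands with the rest of the braid is that they pass under the $H_{k+2}$ group. Using the braid closure one untwists those $H_{k+1}$ strands; this single move simultaneously (i) removes those strands, dropping the braid to $H_{k+2}$ strands, and (ii) deposits a full positive twist on the $H_{k+1}$ sub-bundle that cancels the negative twist coming from the slid loop. What remains is exactly the $r=p+q$ picture of $K(H_{k+1},H_k,H_{k+2},+1)$ on $H_{k+2}$ strands (Figure~\ref{fig:lemma9.3}), and the proof is over. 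The point you are missing is that the cancellation and the strand reduction are achieved by the same braid-closure untwist, not by decomposing the big twist.
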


\begin{proof}
Suppose $\mathcal H$ is the $(m,n)$-Horadam sequence where $m$ and $n$ are positive integers and $\gcd(m,n) = 1$. The twisted torus knot $K(H_{k+3}, H_{k+2}, H_{k+1}, -1)$ can be considered to be the closure of the braid $(\sigma_{H_{k+3} - 1} \cdots \sigma_1)^{H_{k+2}}(\sigma_{H_{k+1}-1} \cdots \sigma_1)^{-H_{k+1}}$. Since $H_{k+3} = H_{k+2}+ H_{k+1}$, we see this braid as in Figure \ref{fig:lemma9.1}, where the top portion of the braid, $(\sigma_{H_{k+3} - 1} \cdots \sigma_1)^{H_{k+2}}$, is considered to be a full twist on the rightmost $H_{k+2}$ strands followed by those strands passing over the leftmost $H_{k+1}$ strands.

Because $H_{k+1} < H_{k+2}$, we can slide the link component at the bottom of Figure \ref{fig:lemma9.1} upward to surround the leftmost $H_{k+1}$ strands of the grouping of $H_{k+2}$ strands on the right. From this, we obtain the braid in Figure \ref{fig:lemma9.2}. Now, because we are working with braid closures, we untwist the leftmost $H_{k+1}$ strands to add a full positive twist. Now we have $H_{k+1}$ strands with a full negative twist followed by a full positive twist. These full twists cancel each other, and we are left with the braid shown in Figure \ref{fig:lemma9.3}, which is the braid representing $K(H_{k+1}, H_k, H_{k+2}, +1)$.
\end{proof}

\begin{figure}[ht]
	\begin{subfigure}{.3\textwidth}
		\begin{tikzpicture}
			\draw (2,1.01) ellipse (.45 and 0.05);
			\fill[white] (1.9,1) rectangle (2.1, 1.2);
			\draw[thick] (2,1) -- (2, 1.75); 
			\draw[thick] (1,1.75) -- (1, 1);
			\draw[thick] (1,1) .. controls (1,.6)and (1.4,-.1) .. (1.45,-.1);
			\draw[thick] (2,0.9) .. controls(2,0) and (1,-.5) .. (1,-1);
			\draw[thick] (1.55,-.25) .. controls (1.7,-.5) and (2,-.8) .. (2,-1);
			\draw[ultra thick] (.75,-1) -- (2.25, -1);
			\draw (1,-1.73) ellipse (.45 and 0.05);
			\fill[white] (0.9,-1.72) rectangle (1.1, -1.65);
			\draw[thick] (1,-1) -- (1, -1.72); 
			\draw[thick] (1,-1.85) -- (1, -2.2); 
			\draw[thick] (2,-1) -- (2, -2.2); 
			\node at (0.3,1.7) {$H_{k+1}$}; 
			\node at (2.6,1.7) {$H_{k+2}$}; 
			\node at (0.2,-1.25) {$H_{k+1}$}; 
			\node at (2.6,-1.25) {$H_{k+2}$}; 
			\node at (2.8,1) {$-1$};  
			\node at (0.15,-1.7) {$+1$};  
		\end{tikzpicture}\caption[b]{}\label{fig:lemma9.1}
	\end{subfigure}
	\begin{subfigure}{.3\textwidth}
		\begin{tikzpicture}
			\draw (0,1.01) ellipse (.45 and 0.05);
			\fill[white] (-.1,1) rectangle (.1, 1.2);
			\draw[thick] (-1.5,-.3) -- (-1.5, 1.75); 
			\draw[thick] (0,1.75) -- (0, 1.);
			\draw[thick] (0, 0.9) -- (0, .6);
			\draw[ultra thick] (-.5,.6) -- (.5, .6);	
			\draw (-.33,.1) ellipse (.4 and 0.05);
			\fill[white] (-.23,.1) rectangle (-.43, .2);
			\draw[thick] (.33, 0.6) -- (.33, -.3);	
			\draw[thick] (-.33, 0.6) -- (-.33, 0.1);	
			\draw[thick] (-.33,0) -- (-.33, -.3); 
			\draw[thick] (-1.5,-.3) .. controls(-1.5,-.9) and (.33,-1.6) .. (.33,-2);
			\fill[white] (-.8, -1.05) circle (3pt);
			\fill[white] (-.4, -1.35) circle (3pt);
			\draw[thick] (-.33,-.3) .. controls (-.33,-.8)and (-1.5,-1.5) .. (-1.5,-2);
			\draw[thick] (0.33,-.3) .. controls (.33,-.8) and (-.8,-1.5) .. (-.8,-2);
			\node at (.8,-.4) {$H_{k}$}; 
			\node at (-.8,-.4) {$H_{k+1}$}; 
			\node at (.6,1.5) {$H_{k+2}$};
			\node at (-2,1.5) {$H_{k+1}$}; 
			\node at (-1., .1) {$+1$};  
			\node at (.8,1.01) {$-1$};  
		\end{tikzpicture}\caption{}\label{fig:lemma9.2}
	\end{subfigure}\caption{Braids representing $K(H_{k+3}, H_{k+2}, H_{k+1}, -1)$}
	\end{figure}
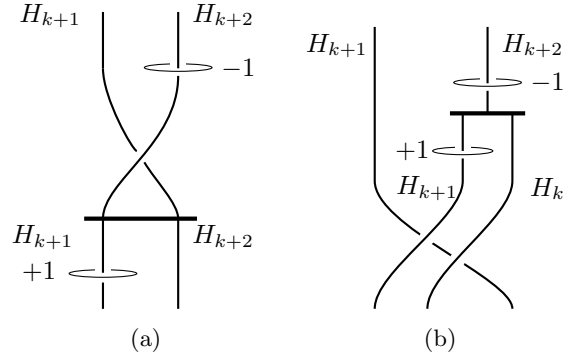
	
	\begin{figure}
		\begin{tikzpicture}
			\draw (0,0.41) ellipse (.45 and 0.05);
			\fill[white] (-.1,.4) rectangle (.1, .6);
			\draw[thick] (0,1.15) -- (0, .4);
			\draw[thick] (0, 0.3) -- (0, 0);
			\draw[ultra thick] (-.5,0) -- (.5, 0);	
			\draw[thick] (.33, 0) -- (.33, -0.3);	
			\draw[thick] (-.33,0) -- (-.33, -.3); 
			\draw[thick] (-.33,-.3) .. controls(-.33,-.9) and (.33,-1.6) .. (.33,-2);
			\fill[white] (0, -1.25) circle (3pt);
			\draw[thick] (.33,-.3) .. controls (.33,-.9)and (-.33,-1.7) .. (-.33,-2);
			\node at (.8,-.5) {$H_{k}$}; 
			\node at (-.9,-.5) {$H_{k+1}$}; 
			\node at (.6,.9) {$H_{k+2}$};
			\node at (.8,.41) {$-1$};  
			\fill[white] (-1,.1) rectangle (-1.9, .2);
		\end{tikzpicture}\caption{A braid representing $K(H_{k+3}, H_{k+2}, H_{k+1}, -1)$ }\label{fig:lemma9.3}

\end{figure}

Next, we state two theorems of Lee that will serve as an important part of the structure of the proof of Proposition \ref{prop:knottypes}.

\begin{theorem}[Lee \cite{LeeTTKTorusPos}]\label{thm:ttktoruspos}
Let $p$, $q$, and $s$ be positive integers such that $p$ and $q$ are coprime and $2\le q <p$. Let $r$ be an integer with $2 \le r \le p + q$, $r\neq p$, and $r$ is not a multiple of $q$. Then the twisted torus knot $K(p,q,r, s)$ is a torus knot if and only if $(p,q,r,s) = (ab + 1,b, b-1, 1)$ for some integers $a \ge 1$ and $b \ge 3$.
\end{theorem}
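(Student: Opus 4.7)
The plan is to prove the two directions of the biconditional separately. For the ``if'' direction, I would take the braid word
$$\beta = (\sigma_{ab}\sigma_{ab-1}\cdots\sigma_1)^{b}\,(\sigma_{b-2}\sigma_{b-3}\cdots\sigma_1)$$
on $ab+1$ strands and show, via a sequence of braid relations together with Markov moves, that its closure is a torus knot. The intuition is that the extra partial twist on $b-1$ strands combines with the underlying $T(ab+1,b)$ braid to destabilize into a standard torus braid; the resulting torus knot $T(c,d)$ can then be identified by matching an invariant such as the Seifert genus or Alexander polynomial. Checking a few small cases (e.g.\ $a=1$, $b=3$, which should yield a $T(5,3)$ up to mirror) should illuminate the general pattern.

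For the ``only if'' direction, I would start from the assumption that $K = K(p,q,r,s)$ is a torus knot $T(c,d)$ and extract numerical constraints from knot invariants. Because $s \geq 1$, the standard braid word $(\sigma_{p-1}\cdots\sigma_1)^{q}(\sigma_{r-1}\cdots\sigma_1)^{s}$ is a positive braid, so by Bennequin's inequality together with Stallings' theorem, Seifert's algorithm produces a minimal-genus surface. This gives an explicit formula for $g(K)$ in terms of $(p,q,r,s)$, which must equal $(c-1)(d-1)/2$. Similarly, the sharpness of the Morton--Franks--Williams inequality on positive braids pins down the braid index of $K$ as $\min(c,d)$. Together with the exclusions $r\neq p$ and $q \nmid r$, these constraints cut the space of possibilities down to a handful of candidate families. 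To close the argument one can invoke the geometric observation that a torus knot sits on a standardly embedded torus, while $K$ naturally lies on the genus-2 Heegaard surface $\Sigma$; the isotopy moving $K$ onto a Heegaard torus must interact with a compressing disk of $\Sigma$ in a controlled way, which should force $r = q-1$, $p = aq+1$, and $s=1$.

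The main obstacle is the case analysis in the ``only if'' direction. The genus and braid-index equations alone leave several infinite families of candidate parameters, and ruling each of them out requires cabling arguments or careful braid surgeries that track the torus knot type through every reduction step. Lee \cite{LeeTTKTorusPos} handles this with a detailed braid-theoretic analysis; while any single case is elementary, organizing the exhaustive exclusion is the technical heart of the proof, and is what prevents a short self-contained argument.
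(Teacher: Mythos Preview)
This theorem is not proved in the paper. It is quoted verbatim from Lee's work \cite{LeeTTKTorusPos} and used only as a black box in the proof of Proposition~\ref{prop:knottypes}, so there is no argument here against which to compare your proposal.

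As for the proposal itself, it is an outline rather than a proof, and you say as much. The ``if'' direction is reasonable in spirit, though you have not actually carried out the braid reduction or identified the resulting torus knot type. For the ``only if'' direction, the genus and braid-index constraints you describe do cut down the parameter space, but they do not by themselves force $r=q-1$ and $s=1$; your final paragraph essentially concedes this by deferring the exhaustive exclusion to Lee. The geometric sketch involving the genus-2 Heegaard surface and a compressing disk is vague---it is not clear what precise statement you intend to prove there or how it would single out the claimed family. In short, what you have written is a plausible roadmap toward Lee's argument, not an independent proof, and the substantive content all lives in the case analysis you have explicitly left open.
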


\begin{theorem}[Lee \cite{LeeTTKTorus}]\label{thm:ttktorusqbig}
Let $p$, $q$, $k$ be positive integers such that $p$ and $q$ are coprime, $2\le q <p$ and $2 \le p - kq$. Suppose $p -kq < q$. Then the twisted torus knot $K(p,q,p-kq, -1)$ is a torus knot if and only if $(p,q,p-kq) = ((a+1)b-1, b, b-1)$ for some integers $a \ge 1$ and $b \ge 3$.
\end{theorem}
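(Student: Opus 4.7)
The plan is to prove the two directions of the equivalence separately. For sufficiency, with $(p,q,p-kq) = ((a+1)b-1, b, b-1)$ (so $k=a$), I would write $K(p,q,p-kq,-1)$ as the closure of the braid $(\sigma_{p-1}\cdots\sigma_1)^{b}(\sigma_{b-2}\cdots\sigma_1)^{-1}$ on $p$ strands and manipulate it using Markov moves and braid relations, in the style of Lemmas \ref{lem:positivesarethesame}--\ref{lem:type6istype3} above. Since the negative twist cycle acts only on the leftmost $q-1$ strands, it should cancel a portion of one of the $b$ cyclic factors of the torus braid, leaving a standard torus braid whose closure is a torus knot. Checking the small case $(a,b) = (1,3)$ (that is, $K(5,3,2,-1)$) should identify the target as $T(p-1,q)$ and serve as a sanity check to guide the general reduction.

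For necessity, assume $K(p,q,p-kq,-1) = T(A,B)$. I would exploit the rigidity of torus-knot invariants in two ways. First, compute the Alexander polynomial $\Delta_K(t)$ of the braid closure via the Burau representation and compare with
\[
\Delta_{T(A,B)}(t) = \frac{(t^{AB}-1)(t-1)}{(t^{A}-1)(t^{B}-1)},
\]
whose cyclotomic factorization is very restrictive; matching roots of unity should pin down relations among $p$, $q$, $k$, $A$, and $B$. Second, compare the Seifert genus: $g(T(A,B)) = (A-1)(B-1)/2$, while the genus of $K(p,q,p-kq,-1)$ can be read from a Bennequin-type calculation on the braid after accounting for the single $-1$ twist. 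Combined with the hypotheses $p$ and $q$ coprime and $2 \le p - kq < q$, these constraints should force $(p,q,p-kq)$ into the claimed form.

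The main obstacle is the necessity direction. Pure invariant comparisons typically leave sporadic exceptional families that must be ruled out through careful case analysis, and the bookkeeping for how the single $-1$ twist interacts with the positive torus braid in the Alexander polynomial is delicate because it mixes positive and negative crossings. I expect the bulk of the work to be in exploiting the inequality $p - kq < q$ together with divisibility relations coming from the cyclotomic structure of $\Delta_{T(A,B)}$ to eliminate all non-conforming $(p,q,k)$; a complementary structural argument, analyzing essential surfaces in the knot complement (a torus knot complement being Seifert fibered), may be needed to close off the remaining cases.
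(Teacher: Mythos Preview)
This theorem is quoted in the paper as a result of Lee \cite{LeeTTKTorus} and is not proved here; it is used as a black box in the proof of Proposition~\ref{prop:knottypes}. There is therefore no proof in this paper against which to compare your proposal.

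On the substance of your sketch: the sufficiency direction via braid manipulation is reasonable and is indeed how Lee handles it. Your necessity direction, however, is more a wish list than an argument. Matching Alexander polynomials and genera is rarely decisive for this kind of question: many twisted torus knots share these invariants with torus knots without being torus knots, and once you mix a negative twist block into a positive braid the Bennequin surface need not realize the genus, so your ``Bennequin-type calculation'' is not guaranteed to give the correct value to compare with $(A-1)(B-1)/2$. The cyclotomic-root bookkeeping you describe also tends to leave infinite families unresolved rather than sporadic exceptions. Lee's actual proof in \cite{LeeTTKTorus} proceeds quite differently: he performs an iterated reduction of the parameters $(p,q,r)$ via explicit isotopies and destabilizations, tracking exactly when the process terminates in a torus braid, rather than comparing abelian invariants after the fact. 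If you want to reconstruct the argument, that is the direction to pursue.
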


\vspace{.1in}

Now that we have established some structure, we prove that for a fixed $m$ and $n$, all of the Type \ref{type:bigr} knots are the same, up to mirror image, but for the Type \ref{type:bigq} and Type \ref{type:biggerr} knots, every non-torus knot is different. 
\begin{proposition}\label{prop:knottypes}
Let $\mathcal H$ be the $(m,n)$-Horadam sequence where $m$ and $n$ are positive integers and $\gcd(m,n) = 1$.
\begin{enumerate}
\item\label{part:type1} For all integers $k\ge 1$, the knot type of $K(H_{k+2}, H_{k}, H_{k+1}, -1)$ is the same as $K(H_2, H_0, H_1, -1)$ or its mirror image.
\item\label{part:type3}  Let $K_k$ denote the twisted torus knot $K(H_{k+2}, H_{k+1}, H_{k}, -1)$. If $m, n \ge 2$, then $K_i$ and $K_j$ have different knot types for all nonnegative integers $i$ and $j$ with $i \neq j$. 
\item\label{part:type5} Let $K'_k$ denote the twisted torus knot $K(H_{k+1}, H_{k}, H_{k+2}, -1)$. If $m, n\ge 2$, then $K'_i$ and $K'_j$ have different knot types for all nonnegative integers $i$ and $j$ with $i \neq j$. 
\end{enumerate}
\end{proposition}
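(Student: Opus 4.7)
For Part (\ref{part:type1}), my plan is to induct on $k$, proving that $K(H_{k+2}, H_k, H_{k+1}, -1)$ is isotopic up to mirror image to $K(H_{k+1}, H_{k-1}, H_k, -1)$. The inductive step follows the braid-manipulation template of Lemmas \ref{lem:positivesarethesame}, \ref{lem:makepthebigone}, and \ref{lem:type6istype3}: express the knot as a partial torus braid on $H_{k+2}$ strands with a single negative full twist on the leftmost $H_{k+1}$ strands, use the Horadam recursion $H_{k+1} = H_k + H_{k-1}$ to split the twist into link-surgery components on $H_k$ and $H_{k-1}$ strands, and slide these components through the braid closure to obtain a braid that simplifies to $K(H_{k+1}, H_{k-1}, H_k, \pm 1)$. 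Iterating brings the index down to $k=0$, producing $K(H_2, H_0, H_1, -1)$ modulo an overall mirror that collects the sign flips.

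For Parts (\ref{part:type3}) and (\ref{part:type5}), my plan is to use the integer lens-space Dehn surgery slope as a knot invariant. By Lemma \ref{lem:primprim}, every Horadam twisted torus knot is primitive/primitive, and the Heegaard surface framing of $K(p, q, r, -1)$ gives a lens-space surgery at slope $pq - r^2$. A direct computation using the Horadam recursion shows that this slope equals $s_{k+1} = H_{k+1}^2 + H_{k+1} H_k - H_k^2$ for $K_k$, and $-t_k = -(H_{k+1}^2 + H_{k+1} H_k + H_k^2)$ for $K'_k$. Lemma \ref{lem:slopesaredifferent} then guarantees that for $i \neq j$ the slopes of $K_i$ and $K_j$, and likewise of $K'_i$ and $K'_j$, differ by at least $2$.

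To convert this slope gap into a knot distinction, I invoke the Cyclic Surgery Theorem of Culler--Gordon--Luecke--Shalen: on a non-torus knot in $S^3$, any two integer slopes producing cyclic Dehn surgery differ by at most $1$. Hence, if $K_i$ and $K_j$ were isotopic non-torus knots with $i \neq j$, the set of integer lens-space surgery slopes of this common knot would contain both $s_{i+1}$ and $s_{j+1}$, contradicting the slope gap of at least $2$; the same argument applies to the $K'_k$. The possible torus-knot values are ruled out using Theorems \ref{thm:ttktoruspos}, \ref{thm:ttktorusqbig}, together with Lemma \ref{lem:makepthebigone}: for $m, n \ge 2$ these results force $H_{k-1} = 1$, which is impossible for $k \ge 1$, leaving at most the case $K_0$ with $m = n - 1$ to be handled directly, and no Type \ref{type:biggerr} knot satisfies the resulting torus-knot constraints.

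The main obstacle is executing the braid manipulation in Part (\ref{part:type1}) carefully enough to track the sign of the twist at each stage and thereby identify the correct mirror image in the conclusion. Once the slope formula $pq - r^2$ and the Cyclic Surgery Theorem are in hand, Parts (\ref{part:type3}) and (\ref{part:type5}) reduce mechanically to Lemma \ref{lem:slopesaredifferent}.
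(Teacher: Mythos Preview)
Your proposal is correct and follows essentially the same approach as the paper: for Part~(\ref{part:type1}) the paper also inducts via a braid manipulation showing $K(H_{k+2}, H_k, H_{k+1}, -1)$ is the mirror of $K(H_{k+1}, H_{k-1}, H_k, -1)$, and for Parts~(\ref{part:type3}) and~(\ref{part:type5}) the paper likewise computes the surface slopes as $s_{k+1}$ and $-t_k$, invokes Lemma~\ref{lem:slopesaredifferent} for the gap of at least $2$, excludes torus knots via Lee's classification theorems, and concludes with the Cyclic Surgery Theorem. Your observation about the residual case $K_0$ when $|m-n|=1$ is a detail the paper glosses over, but it is easily disposed of since at most one $K_k$ can be a torus knot and a torus knot cannot coincide with a non-torus knot.
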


When $m=0$ and $n=1$, $\mathcal H$ is the Fibonacci sequence, so Proposition \ref{prop:knottypes}.(\ref{part:type1}) generalizes Lee's result \cite{LeeTTKUnknots} that twisted torus knots of the form $K(F_{n+2}, F_n, F_{n+1}, -1)$ are all the unknot.  
Proposition \ref{prop:knottypes}.(\ref{part:type3}) tells us that each of the non-torus knots in this family is distinct, providing contrast to Lee's \cite{LeeTTKUnknots} family of twisted torus unknots with parameters in the Fibonacci sequence.

\begin{proof}
We begin with parts \ref{part:type3} and \ref{part:type5}, first establishing that for $m,n \ge 2$, none of these knots are torus knots. For part  \ref{part:type3}, because $H_{k+1} - H_k \ge 2$, Theorem \ref{thm:ttktoruspos} tells us that $K_k$ is not a torus knot. For part \ref{part:type5}, we use Lemma \ref{lem:makepthebigone} and then Lemma \ref{lem:lee} to say $K_k' = K(H_{k+1}, H_{k}, H_{k+2}, -1)$ is isotopic to the mirror image of $K(H_{k+2}, H_{k+1}, H_{k}, +1)$, which is not a torus knot by Theorem \ref{thm:ttktorusqbig} since $H_{k+1} - H_k \ge 2$.

From here, the proofs of parts \ref{part:type3} and \ref{part:type5} will proceed similarly, so we present the proof of part \ref{part:type3}, noting that the surface slopes for $K_k$ and $K'_k$ are $s_{k+1}$ and $-t_k$, respectively.  Consider $K_{i}$ and $K_{j}$ with $i \neq j$. The surface slopes for $K_{i}$ and $K_{j}$ are  $s_{i+1}$ and $s_{j+1}$, respectively, and Lemma \ref{lem:slopesaredifferent} tells us that $|s_{i+1} - s_{j+1}| \ge 2$. 

By Lemma \ref{lem:primprim}, we have that $K_i$ and $K_j$ are primitive/primitive curves, so surgery at the surface slope of each is a lens space. The Cyclic Surgery Theorem \cite{CGLS} tells us that for knots that are not torus knots, the surgery slopes of two lens space surgeries on the same knot must be consecutive integers. Since $|s_{i+1} - s_{j+1}| \ge 2$, $K_i$ and $K_j$ cannot have slopes that are consecutive integers, so they cannot be the same knot.

To prove part \ref{part:type1}, we use induction, noting that both the base case and the inductive case are given by the braids shown in Figures \ref{fig:prop1.1} and \ref{fig:prop1.2}. In the base case, we replace the $k$ in figures with a $0$. We begin by noting that $K(H_{k+2}, H_{k}, H_{k+1}, -1)$ can be drawn as the closure of the braid in Figure \ref{fig:prop1.1}. Since $H_{k+1}$ is larger than $H_k$, we can slide the link component around the $H_{k+1}$ strands to the top of the braid. Then we slide the link component around the $H_k$ strands to the bottom, where it moves through the switch, to produce the braid shown in Figure \ref{fig:prop1.2}. Now, we can untwist the rightmost $H_k$ strands to decrease the number of strands by $H_k$ and obtain another full positive twist on the rightmost $H_k$ strands. This produces a link where the surgery coefficient on the rightmost $H_k$ strands is $-2$. Now we use the fact that a full positive twist on the rightmost $H_k$ strands followed by those $H_k$ strands passing over the leftmost $H_{k-1}$ strands can be written as $(\sigma_{H_{k+1}-1} \cdots \sigma_1)^{H_k}$. Now we have that Figure \ref{fig:prop1.2} can be written as the braid $(\sigma_{H_{k+1}-1} \cdots \sigma_1)^{-H_{k+1}}(\sigma_{H_{k+1}-1} \cdots \sigma_1)^{H_k} (\sigma_{H_{k}-1} \cdots \sigma_1)^{H_k}$, which is the same as $(\sigma_{H_{k+1}-1} \cdots \sigma_1)^{H_k-H_{k+1}} (\sigma_{H_{k}-1} \cdots \sigma_1)^{H_k}$. Because $H_k-H_{k+1} = -H_{k-1}$, the braid in question is on $H_{k+1}$ strands with $H_{k-1}$ single negative twists, followed by a full positive twist on the first $H_k$ strands. The closure of this braid is the mirror image of $K(H_{k+1}, H_{k-1}, H_k, -1)$. \end{proof}

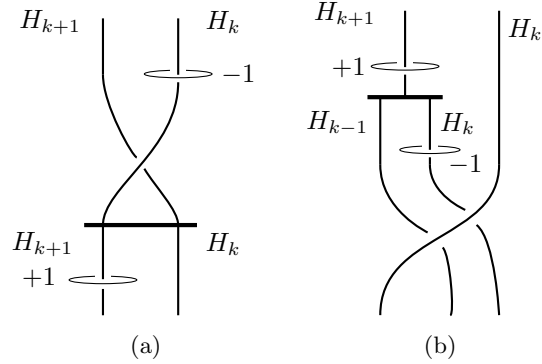
\begin{figure}[ht]
	\begin{subfigure}{.3\textwidth}
		\begin{tikzpicture}
			\draw (2,1.01) ellipse (.45 and 0.05);
			\fill[white] (1.9,1) rectangle (2.1, 1.2);
			\draw[thick] (2,1) -- (2, 1.75); 
			\draw[thick] (1,1.75) -- (1, 1);
			\draw[thick] (1,1) .. controls (1,.6)and (1.4,-.1) .. (1.45,-.1);
			\draw[thick] (2,0.9) .. controls(2,0) and (1,-.5) .. (1,-1);
			\draw[thick] (1.55,-.25) .. controls (1.7,-.5) and (2,-.8) .. (2,-1);
			\draw[ultra thick] (.75,-1) -- (2.25, -1);
			\draw (1,-1.73) ellipse (.45 and 0.05);
			\fill[white] (0.9,-1.72) rectangle (1.1, -1.65);
			\draw[thick] (1,-1) -- (1, -1.72); 
			\draw[thick] (1,-1.85) -- (1, -2.2); 
			\draw[thick] (2,-1) -- (2, -2.2); 
			\node at (0.3,1.7) {$H_{k+1}$}; 
			\node at (2.6,1.7) {$H_{k}$}; 
			\node at (0.2,-1.25) {$H_{k+1}$}; 
			\node at (2.6,-1.25) {$H_{k}$}; 
			\node at (2.8,1) {$-1$};  
			\node at (0.15,-1.7) {$+1$};  
		\end{tikzpicture}\caption[b]{}\label{fig:prop1.1}
	\end{subfigure}
	\begin{subfigure}{.3\textwidth}
		\begin{tikzpicture}
			\draw (0,1.01) ellipse (.45 and 0.05);
			\fill[white] (-.1,1) rectangle (.1, 1.2);
			\draw[thick] (1.25,-.3) -- (1.25, 1.75); 
			\draw[thick] (0,1.75) -- (0, 1.);
			\draw[thick] (0, 0.9) -- (0, .6);
			\draw[ultra thick] (-.5,.6) -- (.5, .6);	
			\draw (.33,-.1) ellipse (.4 and 0.05);
			\fill[white] (.23,-.1) rectangle (.43, .2);
			\draw[thick] (-.33, 0.6) -- (-.33, -.3);	
			\draw[thick] (.33, 0.6) -- (.33, -0.1);	
			\draw[thick] (.33,-.2) -- (.33, -.3); 
			\draw[thick] (1.25,-.3) .. controls(1.25,-1.3) and (-.33,-1.2) .. (-.33,-2.3);
			\draw[thick] (-.33,-.3) .. controls (-.33,-.9)and (.3,-1.2) .. (.3,-1.2);
			\draw[thick] (0.33,-.3) .. controls (.33,-.7) and (.8,-.9) .. (.8,-.9);
			\draw[thick] (.95,-1.1) .. controls(1.2,-1.3) and (1.25,-2.3) .. (1.25,-2.3);
			\draw[thick] (.5,-1.4) .. controls (.6,-1.5) and (.66,-2.3) .. (.6,-2.3);
			\node at (-.8,1.7) {$H_{k+1}$}; 
			\node at (1.6,1.5) {$H_{k}$}; 
			\node at (-.9,.25) {$H_{k-1}$}; 
			\node at (.7,0.25) {$H_{k}$}; 
			\node at (-.75,1) {$+1$};  
			\node at (.8,-0.3) {$-1$};  
		\end{tikzpicture}\caption[b]{}\label{fig:prop1.2}
	\end{subfigure}\caption{Braids representing $K(H_{k+2}, H_{k}, H_{k+1}, -1)$}
\end{figure}

\begin{theorem}[Lee \cite{LeeTTKTorus}]\label{thm:ttktorusqsmall}
Let $p$ and $q$ be coprime integers with $p\ge 3$, and $q \ge 2$ so that $q < p - q$. Then the twisted torus knot $K(p,q,p-q, -1)$ is a torus knot if and only if $(p,q,p-q)$ has one of the following forms:
\begin{enumerate}
\item $(aF_{b+3} - F_{b+2}, aF_{b+1}-F_b, aF_{b+2} - F_{b+1})$ for some integers $a \ge 2$ and $b \ge 1$ such that $(a, b) \neq (2, 1)$, or
\item $(aF_{b+1} + F_{b+2}, aF_{b-1} + F_b, a F_b + F_{b+1})$ for some integers $a, b \ge 2$
\end{enumerate}
where $F_b$ is the $b$th Fibonacci number. When $(p,q,p-q)$ is in one of these forms, $K(p,q,p-q,-1) = T(a+1, (-1)^b a)$.
\end{theorem}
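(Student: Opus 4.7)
The plan is to prove the characterization by combining the braid-reduction method of Proposition \ref{prop:knottypes} with the maximal pair classification of Lemma \ref{lem:subseq}.

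For the ``if'' direction, I would first observe via Lemma \ref{lem:seq} that the two listed families describe $(p, q, p-q)$ as three consecutive terms of a $(\pm 1, a)$-Horadam sequence: Case 2 corresponds to the $(a, 1)$-Horadam sequence with $(p, q, p-q) = (H_{b+2}, H_b, H_{b+1})$, and Case 1 corresponds to the $(-1, a)$-Horadam sequence with $(p, q, p-q) = (H_{b+3}, H_{b+1}, H_{b+2})$. In either case, $K(p, q, p-q, -1)$ is a Type \ref{type:bigr} Horadam twisted torus knot. The inductive braid-reduction of Proposition \ref{prop:knottypes}(\ref{part:type1}) (or a minor adaptation for Case 1, where $m = -1$ forces starting the induction one step in) then sends $K(H_{k+2}, H_k, H_{k+1}, -1)$ to the mirror of $K(H_{k+1}, H_{k-1}, H_k, -1)$, decreasing the Horadam index by one and reflecting. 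Iterating, Case 2 terminates at $K(a+1, a, 1, -1) = T(a+1, a)$ (the twist is trivial since $r = 1$), while Case 1 terminates at the torus knot $T(a+1, -a)$; the $b$ accumulated mirrorings produce the sign $(-1)^b$ in $T(a+1, (-1)^b a)$.

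For the ``only if'' direction, I would argue by contrapositive. Suppose $(p, q, p-q)$ does not fit either listed form. Then by Lemma \ref{lem:subseq} the pair $(q, p-q)$ is not maximal, so some quotient of its Euclidean algorithm is neither $1$ nor a final $2$. Each step of that Euclidean algorithm can be realized by a concrete braid manipulation on $K(p, q, p-q, -1)$ analogous to those in Lemma \ref{lem:positivesarethesame} and Lemma \ref{lem:makepthebigone}, and maximal steps reduce the parameter triple while preserving the Type \ref{type:bigr} structure, whereas the first non-maximal step drops the knot into Type \ref{type:bigq} form $K(H_{k+2}, H_{k+1}, H_k, -1)$ with $m, n \ge 2$, which Theorem \ref{thm:ttktorusqbig} rules out as a torus knot.

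The main obstacle will be making the braid reduction uniform: I must verify that every non-unit Euclidean quotient introduces an irreducible Type \ref{type:bigq} obstruction to being a torus knot. As a backstop, one can appeal to the primitive/primitive property (Lemma \ref{lem:primprim}) together with the Cyclic Surgery Theorem: Lemma \ref{lem:slopesaredifferent} shows that distinct Horadam triples produce surface slopes differing by more than one, so if $K(p, q, p-q, -1)$ were a torus knot $T(\alpha, \beta)$ whose lens space slopes are $\alpha\beta \pm 1$, the resulting Diophantine condition would force the Euclidean algorithm for $(q, p-q)$ into exactly the maximal form of Lemma \ref{lem:subseq}, recovering the two stated families.
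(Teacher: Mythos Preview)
The paper does not prove this theorem at all: it is stated as a result of Lee \cite{LeeTTKTorus} and used as a black box to derive the subsequent corollary. So there is no ``paper's own proof'' to compare against; you are proposing an independent argument.

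Your ``if'' direction is essentially sound. The identification of the two families with consecutive terms of $(\pm 1,a)$-Horadam sequences is correct, and the iterated mirror-reduction of Proposition~\ref{prop:knottypes}(\ref{part:type1}) does carry these down to torus knots. You should be explicit about the terminal step: for the $(a,1)$-sequence the reduction bottoms out at a knot with $r=1$, which lies outside the stated range $r\ge 2$ and must be interpreted directly as $T(a+1,a)$; for the $(-1,a)$-sequence the reduction must stop one step early (at $K(2a-1,a,a-1,-1)$) and you then need an independent identification of that knot as $T(a+1,-a)$, which is not quite the ``minor adaptation'' you suggest.

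Your ``only if'' direction has a genuine gap. The braid reduction actually proved in the paper (Proposition~\ref{prop:knottypes}(\ref{part:type1})) moves \emph{down} the Horadam index within a fixed sequence; it does not implement arbitrary Euclidean steps, and starting from $K(p,q,p-q,-1)=K(H_2,H_0,H_1,-1)$ there is no further index to descend. The Euclidean-step braid moves you invoke ``analogous to'' Lemmas~\ref{lem:positivesarethesame} and~\ref{lem:makepthebigone} are not established anywhere and would need to be proved from scratch. More seriously, your claim that Theorem~\ref{thm:ttktorusqbig} ``rules out'' any resulting Type~\ref{type:bigq} knot is false as stated: that theorem says $K(p,q,p-q,-1)$ with $p-q<q$ \emph{is} a torus knot precisely when $(p,q,p-q)=(2b-1,b,b-1)$, so you would still need to show your reduction never lands on such a triple, which is exactly the substance of the classification you are trying to prove. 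The CGLS ``backstop'' is likewise incomplete: matching the surface slope to $\alpha\beta\pm 1$ for an unknown torus knot $T(\alpha,\beta)$ is a two-variable Diophantine problem that does not obviously force the maximal-pair condition without further work equivalent to Lee's original analysis.
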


As corollary to Theorem \ref{thm:ttktorusqsmall}, we determine when the knots in Proposition \ref{prop:knottypes}(\ref{part:type1}) are torus knots.

\begin{cor}
The twisted torus knot $K(H_{k+2}, H_{k}, H_{k+1}, -1)$ for $H_i \in \mathcal H_{m,n}$ is a torus knot if and only if $(m,n)$ is a maximal pair.
\end{cor}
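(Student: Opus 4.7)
The plan is to apply Proposition \ref{prop:knottypes}(\ref{part:type1}) to reduce the question to the base triple $(H_2,H_0,H_1)=(m+n,m,n)$, then to invoke Lee's torus-knot classifications (Theorems \ref{thm:ttktorusqsmall} and \ref{thm:ttktorusqbig}), and finally to translate the Fibonacci-parameter conclusions into the maximal-pair language using Lemmas \ref{lem:seq} and \ref{lem:subseq}.

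First, Proposition \ref{prop:knottypes}(\ref{part:type1}) shows that $K(H_{k+2},H_k,H_{k+1},-1)$ has the same knot type as $K(H_2,H_0,H_1,-1)=K(m+n,m,n,-1)$ or its mirror image, and the mirror of a torus knot is again a torus knot. So the question reduces to deciding when $K(m+n,m,n,-1)$ is a torus knot. Writing this as $K(p,q,p-q,-1)$ with $p=m+n$ and $q=m$, I split into the cases $m<n$ (so $q<p-q$) and $m>n$ (so $p-q<q$).

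For $m<n$, Theorem \ref{thm:ttktorusqsmall} applies, giving two possible Fibonacci-parameter forms for $(p,q,p-q)$. Substituting $q=m$ and $p-q=n$ and applying Lemma \ref{lem:seq}, form (1) identifies $(m,n)$ as a pair of consecutive terms in some $(-1,a)$-Horadam sequence with $a\ge 2$, while form (2) identifies $(m,n)$ as a pair of consecutive terms in some $(1,a)$-Horadam sequence. By Lemma \ref{lem:subseq}, either condition is equivalent to $(m,n)$ being a maximal pair, corresponding to $q_0=2$ in the first case and $q_0=1$ in the second. For $m>n$, Theorem \ref{thm:ttktorusqbig} (with its parameter $k$ set to $1$) forces $(p,q,p-q)=((a+1)b-1,b,b-1)$ for some $a\ge 1$ and $b\ge 3$; substituting in yields $a=1$, $b=m$, and $n=m-1$ with $m\ge 3$, and a direct Euclidean-algorithm check on $(m-1,m)$ terminates with $l=0$ and $q_0=1$, so this pair is again maximal.

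The main obstacle is the index-matching in the $m<n$ case: one must verify that the two Fibonacci-coefficient triples appearing in Theorem \ref{thm:ttktorusqsmall} correspond precisely to consecutive pairs in $(-1,a)$- and $(1,a)$-Horadam sequences. This is purely bookkeeping via Lemma \ref{lem:seq}, matching $H_k=aF_k-F_{k-1}$ for the $(-1,a)$ sequence and $H_k=F_{k-1}+aF_k$ for the $(1,a)$ sequence against the expressions for $q$ and $p-q$, but it is the only arithmetic step beyond direct citation of the supporting results.
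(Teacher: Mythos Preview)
Your route is valid but differs from the paper's. The paper does \emph{not} reduce to the base triple; instead it applies Theorem~\ref{thm:ttktorusqsmall} directly at level $k$ (where $H_k<H_{k+1}$ holds automatically for $k\ge 1$), identifies the two Fibonacci forms with consecutive terms in $\mathcal H_{-1,a}$ and $\mathcal H_{1,a+1}$ via Lemma~\ref{lem:seq}, uses Lemma~\ref{lem:subseq} to conclude that $(H_k,H_{k+1})$ is a maximal pair, and then invokes Lemma~\ref{lem:maxpairs} to descend from $(H_k,H_{k+1})$ to $(m,n)$. Your reduction via Proposition~\ref{prop:knottypes}(\ref{part:type1}) trades Lemma~\ref{lem:maxpairs} for the already-established isotopy; either dependency works, and your version is arguably more economical since Lemma~\ref{lem:maxpairs} is not used elsewhere in the paper.

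Two caveats. First, form~(2) of Theorem~\ref{thm:ttktorusqsmall} gives $m=aF_{b-1}+F_b$ and $n=aF_b+F_{b+1}$, which by Lemma~\ref{lem:seq} are consecutive terms of $\mathcal H_{a,1}$, not $\mathcal H_{1,a}$; you need the shift observation $\mathcal H_{a,1}\subset\mathcal H_{1,a+1}$ (which the paper makes explicit) before Lemma~\ref{lem:subseq} applies. Second, your $m>n$ branch is both unnecessary and slightly off: the corollary, like the paper's proof via the hypotheses of Lemmas~\ref{lem:subseq} and~\ref{lem:maxpairs}, presumes $1<m<n$, and your claim that $(m,m-1)$ is ``maximal'' conflicts with that definition, which requires the first entry to be the smaller. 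Under the intended hypothesis $m<n$ your argument stands, and the case split into Theorems~\ref{thm:ttktorusqsmall} and~\ref{thm:ttktorusqbig} collapses to just the former.
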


\begin{proof}
Theorem \ref{thm:ttktorusqsmall} tells us that $K(H_{k+2}, H_{k}, H_{k+1}, -1)$ for $H_i \in \mathcal H_{m,n}$ is a torus knot if and only if $(H_{k+2}, H_{k}, H_{k+1})$ is of one of the forms $(aF_{b+3} - F_{b+2}, aF_{b+1}-F_b, aF_{b+2} - F_{b+1})$, with $a \ge 2$, $b \ge 1$ and $(a,b) \neq (2,1)$, or $(aF_{b+1} + F_{b+2}, aF_{b-1} + F_b, a F_b + F_{b+1})$, with  $a, b \ge 2$. 

We have $(H_{k+2}, H_{k}, H_{k+1})$ of the form $(aF_{b+3} - F_{b+2}, aF_{b+1}-F_b, aF_{b+2} - F_{b+1})$ if and only if
$H_k = G_b$ and $H_{k+1} = G_{b+1}$ for some $b \ge 1$ where $G_b, G_{b+1} \in \mathcal H_{-1, a}$. Since $G_b, G_{b+1} \in \mathcal H_{-1, a}$, we have $\mathcal H_{G_b, G_{b+1}}$ is a subsequence of $\mathcal H_{-1, a}$, and Lemma \ref{lem:subseq} tells us that this is possible if and only if $(G_b, G_{b+1})$ is a maximal pair. Further because $(G_b, G_{b+1}) = (H_k,H_{k+1})$,  $(G_b, G_{b+1})$ is a maximal pair if and only if  $(H_k,H_{k+1})$ is a maximal pair. Finally, because $H_k, H_{k+1} \in \mathcal H_{m,n}$, Lemma \ref{lem:maxpairs} tells us that $(H_k,H_{k+1})$ is a maximal pair if and only if $(m,n)$ is a maximal pair.

If $(H_{k+2}, H_{k}, H_{k+1})$ is of the form $(aF_{b+1} + F_{b+2}, aF_{b-1} + F_b, a F_b + F_{b+1})$ with $a,b \ge 2$, the proof is similar, except that we must first note that  the $b$th term in $ \mathcal H_{a,1}$ is the same as $G_{b-1} \in \mathcal H_{1, a+1}$. One can see this by using the definition of the Fibonacci sequence, along with Lemma \ref{lem:seq}: $aF_{b-1} + F_b =  aF_{b-1} + (F_{b-1} + F_{b-2}) =  F_{b-2} +(a+1)F_{b-1} $.

\end{proof}

Parts \ref{part:type3} and  \ref{part:type5} of Proposition \ref{prop:knottypes} is corroborated by Kadokami's work \cite{Kadokami}, as these knots can be drawn as braids in a way that shows they are equivalent to $b^-(H_{k-1}, H_{k})$ and $b^+(H_{k}, H_{k+1})$, respectively. Kadokami does not reference these braids as twisted torus knots, however, so the characterization as twisted torus knots is novel. 

\section{Primitive/primitive and primitive/Seifert twisted torus knots}\label{primplus}

As shown in Lemma \ref{lem:primprim}, all of the $(m,n; 1,1)$-Horadam twisted torus knots are primitive/primitive. In this section, we provide a list of all of the primitive/primitive twisted torus knots and note that not all of the knots in this list can be seen immediately as Horadam twisted torus knots. Further, we list all of the twisted torus knots that are middle- or hyper-Seifert with respect to $H$ and primitive with respect to $H'$, expanding and correcting work of Dean's that listed the twisted torus knots that are middle-Seifert with respect to $H$ and primitive with respect to $H'$.

\begin{theorem}\label{thm:pp}
Let $p$ and $q$ be positive, coprime integers which are each at least 2 and let $r$ be an integer with $2 \le r \le p+q$. The primitive/primitive twisted torus knots are those of the form $K(p,q,r, \pm 1)$ where $(p,q,r)$ is a triple of the following type:
\begin{enumerate}
\item\label{list:1} $(p,q,p+q)$,
\item\label{list:2} $(p,q,p-q)$,
\item\label{list:3} $(2ij+i + j + \frac{1+\delta}{2}, 2j+1, 2ij + i + j + \frac{1- \delta}{2})$, where $\delta = \pm 1$, $i \ge 0$, $j \ge 1$,
\item\label{list:4} $(3j+1  + \frac{1+\varepsilon}{2}, 2j+1, 4j + 2 + \varepsilon)$, where $\varepsilon = \pm 1$ and $j \ge 1$, or
\item\label{list:5} $(2jk + k + 2 \eps, 2j+1, 2jk+k + \eps)$ where $\eps = \pm 1$, $j \ge 1$, $k \ge 1$, and $(j,k, \eps) \neq (1, 1, -1)$.
\end{enumerate}
\end{theorem}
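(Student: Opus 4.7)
The plan is to invoke Dean's primitivity criterion stated in the proof of Lemma~\ref{lem:primprim}: $K(p,q,r,\pm 1)$ is primitive with respect to both handlebodies exactly when $r\equiv \pm 1$ or $\pm q\pmod p$ and $r\equiv \pm 1$ or $\pm p\pmod q$. The forward direction---that each triple listed in Types (\ref{list:1})--(\ref{list:5}) satisfies both congruences---reduces to direct substitution into the parametric formulas and will take only a few lines per family. For the backward direction I may assume $p>q$ by Lemma~\ref{lem:lee}, and then enumerate the sixteen sign-and-target combinations. Since $\gcd(p,q)=1$, the Chinese Remainder Theorem assigns each combination a unique residue modulo $pq$, and since $pq>p+q$ for coprime $p,q\ge 2$, the interval $[2,p+q]$ contains at most one lift.

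The sixteen combinations organize into three groups. The \emph{diagonal} group $r\equiv \pm q\pmod p$ with $r\equiv \pm p\pmod q$ yields $r=p+q$ and $r=p-q$, which are Types (\ref{list:1}) and (\ref{list:2}); the remaining sign choices in this group either coincide with these or push $r$ outside $[2,p+q]$. The \emph{mixed} group $r\equiv \pm 1\pmod p$ with $r\equiv \pm p\pmod q$ produces $r=p\pm 1$ subject to the divisibility $q\mid 2p\mp 1$; coprimality of $p$ and $q$ forces $q$ odd, so writing $q=2j+1$ and solving for $p$ yields $p=2ij+i+j+\tfrac{1\pm 1}{2}$, which is Type (\ref{list:3}) (with $i\ge 1$ under $p>q$, while $i=0$ captures the representations arising after using Lemma~\ref{lem:lee} to swap $p,q$). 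The mirror subgroup $r\equiv \pm q\pmod p$ with $r\equiv \pm 1\pmod q$ contributes one new family, $r=2p-q$ when $p\le 2q$ and $2p\equiv \pm 1\pmod q$, which forces $p\in\{3j+1,3j+2\}$ and $r\in\{4j+1,4j+3\}$---Type (\ref{list:4}). Finally, the \emph{trivial} group $r\equiv \pm 1\pmod p$ and $r\equiv \pm 1\pmod q$ gives $r=p\pm 1$ subject to $p\equiv \mp 2\pmod q$; writing $q=2j+1$ and $p=k(2j+1)\pm 2$ produces Type (\ref{list:5}), with the exclusion $(j,k,\eps)\ne(1,1,-1)$ removing the single lift where $r=1$.

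The main obstacle will be the bookkeeping across the sixteen sign combinations: confirming no case is missed, tracking when secondary CRT lifts such as $r=2p-q$ enter $[2,p+q]$ (requiring $p\le 2q$), and handling $q=2$ separately, where $p$ is forced odd and the mod-$q$ conditions collapse to ``$r$ odd,'' so only $r=p\pm q$ survive, recovering Types (\ref{list:1}) and (\ref{list:2}). I will also need to verify that the harmless overlaps between the five types---for instance $(4,3,5)$ lying in both Types (\ref{list:3}) and (\ref{list:4})---do not indicate missing constraints, that the parameter bounds $i\ge 0$, $j\ge 1$, $k\ge 1$ in each type are sharp, and that the exclusion in Type (\ref{list:5}) is the only case where the generic parametrization produces $r<2$.
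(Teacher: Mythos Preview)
Your plan is essentially the paper's own proof: both invoke Dean's criterion, assume $p>q$ via Lemma~\ref{lem:lee}, split into the four target pairings (your diagonal/mixed/mirror/trivial groups are exactly the paper's Cases (a)--(d)), and then use the bound $2\le r\le p+q$ to isolate the finitely many lifts in each sign sub-case. A few small slips to fix while you carry out the bookkeeping: in the diagonal group the choice $r\equiv -q\pmod p$, $r\equiv -p\pmod q$ does produce in-range sporadic triples $(4,3,5)$ and $(5,3,7)$ (they land in Type~(\ref{list:4}), not Types~(\ref{list:1})/(\ref{list:2}), so your ``coincide or fall outside'' claim needs adjusting); in the mixed group the divisibility is $q\mid 2p\pm 1$ for $r=p\pm 1$ (signs aligned, not opposed); and the Type~(\ref{list:5}) exclusion $(j,k,\eps)=(1,1,-1)$ is needed because it yields $p=1$, not $r=1$.
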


\begin{proof}
Dean \cite{DeanSFSSurgeries} shows that a twisted torus knot $K(p,q,r, \pm 1)$ is primitive/primitive if and only if $r \equiv \pm 1$ or $\pm q \pmod p $ and $r \equiv \pm 1$ or $\pm p \pmod q$. We have four cases to consider.
\begin{enumerate}[(a)]
\item\label{case:D} $r \equiv \pm q \pmod p$ and $r \equiv \pm p \pmod q$
\item\label{case:A} $r \equiv \pm 1 \pmod p$ and $r \equiv \pm p \pmod q$
\item\label{case:B} $r \equiv \pm q \pmod p$ and $r \equiv \pm 1 \pmod q$
\item\label{case:C} $r \equiv \pm 1 \pmod p$ and $r \equiv \pm 1 \pmod q$
\end{enumerate}

Here, we assume that $p>q$, as Lemma \ref{lem:lee} tells us that $K(p,q,r,\pm 1)$ and $K(q,p,r,\pm1)$ are the same knot. We can recover the case where $q>p$.

In Case \ref{case:D}, we have $r = kq + \eps p = lp + \delta q$, where $\eps = \pm 1$, $\delta = \pm 1$, and $k$ and $l$ are nonzero integers. (Neither $k$ nor $l$ can be zero because $p$ and $q$ are relatively prime and $p,q \ge 2$.) Because $ r \le p + q$, we have the requirement $ lp + \delta q \le p+q$. Because $p>q$, $lp + \delta q > (l-1) p$. This means $(l-1)p < p+q$. Since $p+ q < 2p$, we have $l < 3$. On the other hand, $r$ is positive,  so $p>q$ tells us that $l \ge 1$. Then $l \in \{1, 2\}$.

When $l = 1$, $r = p + \delta q = kq + \eps p$, so $(1-\eps) p = (k - \delta) q$. If $\eps = 1$,  then $k = \delta$ and $p$ and $q$ are independent of each other. This gives us the set of triples $(p, q, p + \delta q)$, which are Types \ref{list:1} and \ref{list:2} from the statement of the theorem. If $\eps = -1$, then $q = 2$ and $p = k - \delta$. In this case $r = k + \delta$, so we have the triple $(k- \delta, 2, k + \delta)$, which is a special case of Type \ref{list:1} or \ref{list:2}, depending on the value of $\delta$.

When $l = 2$, $r = 2 p + \delta q = kq  + \eps p$. The requirement that $r \le p + q$ forces $\delta = -1$, so we have $2p - q = k q + \eps p$. Then $(2- \eps) p = (k+1) q$. Since $2- \eps \in \{1, 3\}$ and $q \ge 2$, we have $\eps = -1$, $q = 3$, $p= k+1$ and $r = 2k - 1$. Revisiting the requirement that $r \le p + q$, we have $2k-1 \le k+4$, so $k \le 5$. Since $p > q$, we also have $k \ge 3$. This provides the three triples $(4,3,5)$, $(5, 3, 7)$, and $(6, 3, 9)$, but the triple $(6,3,9)$ has $\gcd(p,q) = 3$. The triples $(4,3,5)$ and $(5, 3, 7)$ are special cases of Type \ref{list:4} with $j=1$. 

In Case \ref{case:A}, $r = kq + \eps p  = lp + \delta$, where $\eps = \pm 1$, $\delta = \pm 1$, and $k$ and $l$ are integers. The requirement that $r \ge 2$ provides the restriction that $l$ be a positive integer. Since $r \le p+q$, we have $l \le 2$, so that $l \in \{1,2\}$.

When $l = 1$, we have $kq + \eps p = p + \delta$, so $(1-\eps) p = kq - \delta$. This forces $\eps = -1$ so that $2p = kq - \delta$ and $k \ge 1$. Since $2p$ is even, we have that  $k$ and $q$ are both odd, so we let $k = 2i+1$ for some nonnegative integer $i$ and $q = 2j+1$ for some positive integer $j$. Since $2p = kq - \delta$, we have $p = 2ij + i + j + \frac{1-\delta}{2}$ and $r = p + \delta = 2ij + i + j + \frac{1+\delta}{2}$. This is Type \ref{list:3}.

When $l = 2$, $kq + \eps p = 2p + \delta$. We have $r = 2p + \delta \le p+ q$, so $p \le q - \delta$. Since $p>q$, this means $\delta = -1$, $p = q+1$, and $r = 2q+1$. This produces the triple $(q+1, q, 2q+1)$, a special case of Type \ref{list:1}. 

In Case \ref{case:B}, $r = kq + \eps = lp + \delta q$, where $\eps = \pm 1$, $\delta = \pm 1$, and $k$ and $l$ are integers. The requirement that $r \ge 2$ provides the restriction that $l$ be a nonnegative integer and $k \ge 1$. Since $r \le p+q$, we have $l \le 2$, so that $l \in \{0, 1,2\}$.

When $l = 0$, the two expressions for $r$ reveal that $kq + \eps = \delta q$, which is impossible, as it would require $q$ to be 1, contradicting our assumptions. When $l = 1$,  $kq  + \eps = p + \delta q$. Then we have $p = (k - \delta ) q + \eps$ and $r = kq  + \eps$. The triple $((k - \delta ) q + \eps, q, kq + \eps)$ is a special case of Type \ref{list:1} or \ref{list:2}, depending on the value of $\delta$.

When $l = 2$, $r = kq + \eps = 2p + \delta q$. Since $r \le p+q$, we have $\delta = -1$. Using the two expressions for $r$, we have $2p = (k+1)q  + \eps$. Then we know that $k+1$ and $q$ are both odd, so we write $k = 2i$ and $q = 2j + 1$ for some positive integers $i$ and $j$. Then $p = 2ij + i + j + \frac{1+\eps}{2}$ and $r = 4ij + 2i +\eps$. Since $r \le p + q$, we need $4ij + 2i +\eps \le 2ij + i + 3j + \frac{1+\eps}{2} +1$, which can be rearranged as $i(2j + 1)  \le 3j+1 + \frac{1-\eps}{2}$. If $i \ge 2$, $i(2j+1) \ge 4j+4$, which is strictly greater than $3j+1 + \frac{1-\eps}{2}$ for all values of $j$. Then $i = 1$, and we have $p = 3j+1 + \frac{1+\eps}{2}$ and $r = 4j + 2 + \eps$. These are the triples that make up Type \ref{list:4}.

In Case \ref{case:C}, $r = kq + \eps = lp + \delta $, where $\eps = \pm 1$, $\delta = \pm 1$, and $k$ and $l$ are integers. The requirement that $r \ge 2$ provides the restriction that both $k$ and $l$ are positive. Since $r \le p+q$, we have $l \le 2$, so that $l \in \{1,2\}$. 

When $\eps = \delta$, we have $kq = lp$. Since $p$ and $q$ are relatively prime, this means $ l = \mu q$ and $k = \mu p$ for some positive integer $\mu$. Then $r = \mu p q + \eps$. Because $q \ge 2$, if $\mu \ge 2$ or $q \ge 3$, then $r \ge 3p + \eps > p+ q$. Then $\mu = 1$ and $q = 2$. The requirement that $r \le p+q$ can only hold then when  $2p+ \eps \le p + 2$, which means $\eps = -1$ and $p = 3$. Thus, we have the triple $(3, 2, 5)$, which is a special case of Type \ref{list:1}.

Now suppose $\eps = - \delta$, so $kq + \eps = lp - \eps$. When $l = 1$, $p= kq + 2\eps$, and $r = kq + \eps$. Because $p$ has the same parity as $kq$, the requirement that $p$ and $q$ are relatively prime forces $q$ to be odd. Then we can write $q = 2j + 1$ for some positive integer $j$, and we have the examples of Type \ref{list:5}. 

When $l = 2$, $r = 2p - \eps$. Since $r \le p+q$ and $p >q$, we must have that $\eps = 1$. Since $2p -1 \le p+q$, we have $p \le q+1$. That is, $p = q+1$ and $r = 2q+1$, so we have the triple $(q+1, q, 2q+1)$, which is a special case of Type \ref{list:1}.

This completes the proof.
\end{proof}

\vspace{.1in}

 The Horadam twisted torus knots can be described as being in the first two families of primitive/primitive twisted torus knots listed in Theorem \ref{thm:pp}. Quick computations show that for the other three families listed in Theorem \ref{thm:pp}, $r$ is not equal to $p \pm q$, so the parameters are not Horadam parameters. However, since twisted torus knots can have multiple sets of parameters representing the same knot type (as demonstrated in Proposition \ref{prop:knottypes}.(\ref{part:type1})), it is possible that these knots could have a different set of Horadam parameters. We leave as an open question whether the set of all Horadam twisted torus knots is disjoint from the collection of all knots in the remaining three families. \\
 
Dean \cite{DeanSFSSurgeries} classifies the types of Seifert knots as being hyper-, middle- and end-Seifert, and he claims a classification of the knots $K(p,q,r,\pm 1, 1)$ that are primitive with respect to $H'$ and middle-Seifert with respect to $H$. Here, we expand and correct Dean's work and talk about how the Horadam twisted torus knots do not generalize to this set.

\begin{theorem}\label{thm:phS}
Let $p$ and $q$ be coprime integers with $p\ge 3$ and $q \ge 2$. The twisted torus knots $K(p,q,r, m, \pm 1)$ with $2 \le r \le p+q$ that are hyper-Seifert fibered with respect to $H$ and primitive with respect to $H'$ are those with $|m|>1$ and $(p,q,r)$ given by the same list of triples as in Theorem \ref{thm:pp}.
\end{theorem}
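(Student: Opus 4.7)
The plan is to piggyback on the proof of Theorem \ref{thm:pp}. The first step is to invoke Dean's classification results from \cite{DeanSFSSurgeries}: a twisted torus knot $K(p,q,r,m,\pm 1)$ is primitive with respect to $H'$ under exactly the same modular conditions on $r$ relative to $q$ (namely $r \equiv \pm 1$ or $\pm p \pmod q$) as in the primitive/primitive case, and it is hyper-Seifert with respect to $H$ precisely when $|m| > 1$ together with the modular condition $r \equiv \pm 1$ or $\pm q \pmod p$. The effect of the twist multiplicity $m$ is to distinguish primitive ($|m|=1$) from hyper-Seifert ($|m|>1$) behavior on the $H$-side, while the congruence conditions controlling the curve's slope on the genus-$2$ surface depend only on $(p,q,r)$. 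Collecting these statements, I would state and use: $K(p,q,r,m,\pm 1)$ is hyper-Seifert with respect to $H$ and primitive with respect to $H'$ if and only if $|m|>1$ and
\[
r \equiv \pm 1 \text{ or } \pm q \pmod p \quad \text{and} \quad r \equiv \pm 1 \text{ or } \pm p \pmod q.
\]

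Once this reduction is in place, the combinatorial classification is identical to the one carried out in the proof of Theorem \ref{thm:pp}. I would therefore open the proof by recording the $|m|>1$ restriction, note that the two congruence conditions on $r$ are precisely those that drove the case analysis (a)--(d) in Theorem \ref{thm:pp}, and then simply invoke that case analysis: cases \ref{case:D}, \ref{case:A}, \ref{case:B}, \ref{case:C} yield exactly the five families of triples \ref{list:1}--\ref{list:5} listed in Theorem \ref{thm:pp}, with the same parity and size bounds on $i$, $j$, $k$, $\eps$, $\delta$. The use of Lemma \ref{lem:lee} to assume $p > q$ carries over without change.

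The main obstacle is really bookkeeping rather than mathematics: I must be careful that Dean's hyper-Seifert criterion with respect to $H$ uses the same modular condition on $r$ as the primitive-with-respect-to-$H$ criterion, with the only substantive difference being the range of $m$. Assuming this is what Dean proves (which must be stated cleanly, perhaps in a preparatory lemma that cites \cite{DeanSFSSurgeries} explicitly, clarifying that the middle-Seifert versus hyper-Seifert distinction does not alter the congruence on $r$), the proof of Theorem \ref{thm:phS} reduces to a single sentence: ``The conclusion follows by applying the case analysis of Theorem \ref{thm:pp} to the same congruence conditions on $(p,q,r)$, together with the extra requirement $|m|>1$.'' The only step requiring independent verification is confirming that no triple is gained or lost when $|m|>1$ replaces $|m|=1$; since the case analysis never used $m$ beyond the sign of the twist, no triples are affected, and the lists coincide.
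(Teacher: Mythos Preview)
Your proposal is correct and matches the paper's own proof essentially line for line: the paper simply cites Dean \cite{DeanSFSSurgeries} to say that hyper-Seifert with respect to $H$ is characterized by $|m|>1$ together with $r \equiv \pm 1$ or $\pm q \pmod p$, and then observes that Theorem \ref{thm:pp} already handled exactly these congruence conditions on $(p,q,r)$. Your additional remark that the case analysis of Theorem \ref{thm:pp} never used $m$ is a nice sanity check but is not made explicit in the paper.
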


\begin{proof}
Dean \cite{DeanSFSSurgeries} indicates that the hyper-Seifert fibered twisted torus knots are those that have the $|m|>1$ and $r \equiv \pm 1$ or $\pm q \pmod p$. In other words, Theorem \ref{thm:pp} also lists the values $(p,q,r)$ for which $K(p,q, r, m, \pm1)$, with $|m|>1$, is a primitive/hyper-Seifert twisted torus knot.
\end{proof}

Next, we consider the list of knots that are middle-Seifert with respect to $H$ and primitive with respect to $H'$, similar to Dean's Theorem 4.1. 

\vspace{.1in}

\begin{theorem}\label{thm:pS}
Let $p$ and $q$ be coprime integers with $p\ge 3$ and $q \ge 2$. The twisted torus knots $K(p,q,r, \pm 1)$ with $2 \le r \le p+q$ that are middle-Seifert fibered with respect to $H$ and primitive with respect to $H'$ are given by the following triples $(p,q,r)$:
\begin{enumerate}
\item $(p,q, p-k q)$ where $2 \le k < p/q$,
\item $(p, 3, p+i)$ where $i \in \{1,2\}$ and $p \equiv i \pmod 3$,
\item $(i(2j + 1) + j + \frac{1+\eps}{2}, 2j+1, (i+1)(2j + 1) + \eps)$ where $i, j \ge 1$.
\end{enumerate}
\end{theorem}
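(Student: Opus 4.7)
The plan is to mirror the case analysis used in the proof of Theorem \ref{thm:pp}. From Dean's work I extract the algebraic characterization of when $K(p,q,r,\pm 1)$ is middle-Seifert fibered with respect to $H$, which amounts to a congruence condition on $r$ modulo $p$ distinct from the $r \equiv \pm 1, \pm q \pmod p$ condition that produced the primitive/primitive and primitive/hyper-Seifert examples in Theorems \ref{thm:pp} and \ref{thm:phS}. Combining this middle-Seifert congruence with the primitive-$H'$ condition $r \equiv \pm 1$ or $\pm p \pmod q$, and using Lemma \ref{lem:lee} to assume $p > q$, reduces the theorem to a Diophantine enumeration analogous to the one already carried out for Theorem \ref{thm:pp}.

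In each case I write $r$ in the form dictated by one of the two congruences (for example $r = kq + \eps$ with $\eps \in \{\pm 1\}$), substitute into the other congruence to get an equation of the shape $(a + b\eps)p = (c + d\eps)q + e$, and impose the bounds $2 \le r \le p+q$ along with $\gcd(p,q) = 1$. As in the proof of Theorem \ref{thm:pp}, the quotient corresponding to $p$ is forced to be one of a small number of values, and the quotient corresponding to $q$ is then constrained by the bound on $r$. The resulting solution sets organize into three families: family (1) arises from the case $r = p - kq$, where the middle-Seifert hypothesis forces $k \ge 2$ (ruling out the primitive case $k=1$); family (3) comes from the generic case combining the middle-Seifert congruence with $r \equiv \pm 1 \pmod q$, giving the $(i,j,\eps)$ parametrization; and family (2) appears as an exceptional small-$q$ solution, specifically $q = 3$, where an arithmetic coincidence (the nontrivial residues mod $3$ being only $\pm 1$) produces triples not captured by the generic analysis.

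The main obstacle will be precisely identifying and justifying the correction to Dean's Theorem 4.1. Dean's original statement, when checked against small examples, omits or misstates certain triples — most visibly the $q = 3$ case producing family (2) — so part of the work is exhibiting explicit triples (such as $(4,3,5)$ and $(5,3,7)$ in family (2), and the smallest members of family (3)) that were absent from Dean's list, and then verifying that, with families (1)–(3) in hand, the enumeration is exhaustive. A secondary subtlety worth flagging is that certain triples simultaneously satisfy both the middle-Seifert-$H$ and the primitive-$H$ conditions, so they also appear in Theorem \ref{thm:pp}; the statement of Theorem \ref{thm:pS} already accommodates this overlap without requiring further adjustment, but the enumeration must be careful not to rule out such triples on the grounds that they have already been classified.
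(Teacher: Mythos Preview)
Your overall strategy---extract Dean's congruence conditions, combine the middle-Seifert-$H$ condition with the primitive-$H'$ condition, assume $p>q$ via Lemma \ref{lem:lee}, and run a Diophantine case analysis parallel to Theorem \ref{thm:pp}---is the same as the paper's. But the proposal is vague exactly where it cannot afford to be: the middle-Seifert condition is not merely ``a congruence condition distinct from $r\equiv\pm 1,\pm q\pmod p$''; it is specifically $r\equiv\pm\beta q\pmod p$ with the range restriction $2\le\beta<p/q$. That upper bound on $\beta$ is the engine of the whole enumeration (it forces $p>2q$ at the outset and sharply limits the possible values of the auxiliary integers), and without it you will not reproduce the three families cleanly.

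Relatedly, you have the nature of Dean's error backwards. Dean did not omit triples; he listed too many, because in four of his five cases he neglected the bound $\beta<p/q$ and thereby admitted parameters with $q>p/2$, where no integer $\beta\ge 2$ can exist. Families (2) and (3) are not corrections to Dean but expansions: Dean worked only in the range $r\le\max(p,q)=p$, and families (2) and (3) live in $p<r\le p+q$. In particular your proposed witnesses $(4,3,5)$ and $(5,3,7)$ are not examples of family (2) at all---for $p=4$ or $5$ and $q=3$ there is no $\beta$ with $2\le\beta<p/3$, so these triples are not middle-Seifert; the smallest genuine member of family (2) has $p=7$. This confusion would resolve itself once you write down the $\beta$-constraint explicitly and track it through the cases.
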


We find that in Dean's Theorem 4.1, the inquality $2 \le \beta < p/q$ was forgotten in four of the five cases listed there. Specifically, several cases in that proof consider the option that $q > p/2$, but in that case $p/q <2$, so there are no possible integral values for $\beta$. Hence, only Dean's second case, listed first in Theorem \ref{thm:pS} here, can occur with the hypotheses in Dean's work. 

In his work, Dean restricts to the case  $r \le \max(p,q)$. In the second and third families listed above in Theorem \ref{thm:pS}, $r$ is larger than $p$, so these cases expand Dean's work beyond $r \le \max(p,q)$.

\begin{proof}
Dean \cite{DeanSFSSurgeries} shows that a twisted torus knot $K(p,q,r, \pm 1)$ is primitive with respect to $H'$ if and only if $r \equiv \pm 1$ or $\pm p \pmod q$. Further, the twisted torus knot $K(p,q,r, \pm 1)$ is middle-Seifert with respect to $H$ when $r \equiv \pm \beta q \pmod p$, where $2 \le \beta < \frac{\max(p,q)}{\min(p,q)}$. We have two cases to consider.
\begin{enumerate}[(a)]
\item\label{case:D} $r \equiv \pm p \pmod q$ and $r \equiv \pm \beta q \pmod p$
\item\label{case:A} $r \equiv \pm 1 \pmod q$ and $r \equiv \pm \beta q \pmod p$
\end{enumerate}

Lemma \ref{lem:lee} tells us that we can assume that $p >q$, so as noted above, the inequality $2 \le \beta < p/q$ forces $p > 2q$.

Case \ref{case:D}: The conditions of this case require that $r = kq + \eps p$ and $r = l p + \delta \beta q$, where $\eps = \pm 1$, $\delta = \pm 1$, $\ds k \ge \left\lceil \frac{2-\eps p}{q} \right\rceil$,  and $l$ is a nonzero integer satisfying $\ds l \ge \left\lceil \frac{2-\delta\beta q}{p}\right\rceil$. 

Using $r= kq + \eps p = lp + \delta \beta q$ and the bound on $r$, we have a lower bound on $lp$: $lp \ge 2 - \delta \beta q$. Because $\delta = \pm 1$ and $\beta$ and $q$ are positive, we can say that $2 - \delta \beta q \ge 2 - \beta q$. Further, $\beta < p/q$, so $ 2- \beta q > 2 - p$. Then we have a (fairly loose) lower bound $ lp > 2-p$. In particular, because $l \neq 0$, this tells us that $l \ge 1$. We know that  $lp + \delta \beta q \le p + q$, so $(\delta \beta -1) q \le (1-l)p$. Since $1-l \le 0$ and $\beta, q \ge 2$, we have $\delta = -1$. 

The equality $r= kq + \eps p = lp - \beta q$ tells us that $(k+ \beta )q = (l-\eps)p$. Since $p$ and $q$ are relatively prime,  $l = mq + \eps$ and $k = mp - \beta$ for some integer $m$. Because $l \ge 1$, either $(m,\eps)= (0,1)$ or $ m\ge 1$. If $(m,\eps)= (0,1)$, then $l = 1$ and $k = -\beta$, and we have $(p,q,r) = (p, q, p - |k| q)$, where $2 \le |k| < p/q$. These knots are the first family listed in our theorem, using the more familiar notation of Dean \cite{DeanSFSSurgeries}.

If $m = 1$ and $\eps = -1$, $l = q-1$ and $k = p - \beta$. This gives $r = pq - \beta q - p \le p + q$, which tells us that $p(q-2) \le (\beta +1)q$. This inequality cannot hold when $q\ge 4$, because $2p\le p(q-2)$, but $(\beta +1)q < p+q$ This would require $p<q$, which contradicts our assumptions. When $q = 2$, $r = p - 2\beta$, so we have $(p,q,r) = (p, 2, p - 2\beta)$, where $2 \le \beta < p/2$. This is a special case of the knots listed in the first family of the theorem. 

When $q = 3$, we have $r = 2p - 3\beta $, where $2 \le \beta  <p/3$. To maintain the requirement that $2p- 3\beta \le p + 3$, we need $p \le 3\beta + 3$. On the other hand, $\beta < p/3$, so $3\beta$ must be the element of $\{p-1, p-2, p-3\}$ that is divisible by 3. Since $p$ and $q = 3$ must be relatively prime, $3\beta$ will be either $p-1$ or $p-2$, depending on the congruence class of $p$ modulo 3. 

When $p \equiv 1 \pmod 3$, $3 \beta = p-1$, so $r = p + 1$, and when $p \equiv 2 \pmod 3$, $3 \beta = p-2$, so $r = p + 2$. Then we have the triples $(p,3,p+1)$ with $p \equiv 1 \pmod 3$ and $(p,3,p+2)$ with $p \equiv 2 \pmod 3$. 
These knots constitute the second family listed in the theorem.

If $m \ge 1$ and $\eps = 1$,  we have $r = lp - \beta q = mpq + p - \beta q$. Because $\beta < p/q$, $p - \beta q >0$, so $r \ge mpq$. Then $r$ is strictly greater than $p+q$, and no examples arise from this case.

If $m \ge 2$ and $\eps = -1$, then $kq - p = (mp - \beta) q - p $, which is strictly greater than $(2p -p/q) q - p = 2p(q - 1)$. The expression $2p(q-1)$ is strictly greater than $p+q$ for all allowable values of $p$ and $q$, so no examples arise in this case.

\underline{Case \ref{case:A}:} The conditions of this case require that $r = kq + \eps$ and $ r  = lp + \delta \beta q$, where $\eps = \pm 1$, $\delta = \pm 1$, $k \ge 1$, and $l$ is a nonzero integer satisfying $\ds l \ge \left\lceil \frac{2-\delta\beta q}{p}\right\rceil$.

First, we consider the lower bound on $l$. When $\delta = 1$, because $\beta < p/q$, $\frac{2-\beta q}{p} > \frac{2-p}{p}$. We assume that $p\ge 3$, so $\frac{2-p}{p}$ is greater than $-1$, which tells us that $\left\lceil \frac{2-\beta q}{p}\right\rceil \ge 0$, so the lower bound on $l$ is nonnegative. When $\delta = -1$, we have $l \ge \left\lceil \frac{2+\beta q}{p}\right\rceil$. Because $2 \le \beta$ and $2\le q$, $\frac{2 + \beta q}{p} \ge \frac{6}{p}$. Hence, we find that $l$ will be bounded below by a positive number.

Since $r\le p+q$, we have $kq + \eps \le p+q$, so we know $ (k-1)q + \eps \le p$. Because $2 \le \beta < p/q$,  $ (k-\beta) q + \eps < p$. Then if $\delta = 1$, the equation $r = kq + \eps = lp +\beta q$ requires that $l$ be negative. As noted above $l$ cannot be negative, so we have $\delta = -1$.

Then $r = kq + \eps = lp - \beta q$. This tells us that $lp = (k+ \beta)q + \eps$. We can rewrite the right side of this equation as $(k-1)q + \eps + (\beta + 1) q$, which we know is at most $p + (\beta + 1) q$. Then, using the strict upper bound on $\beta$, we have $l p < 2p + q$. That is, $l \le 2$, and the possible values of $l$ are 1 and 2. 

When $l =1$, we have $r = kq + \eps$ and $p = (k+ \beta) q + \eps$, so we see that $r = p - \beta q$. This is a special case of an already-listed family of knots. When $l = 2$, we have $2p = (k+\beta) q + \eps$, so we see that $k+ \beta$ and $q$ are both odd. Let $k+ \beta = 2i + 1$ and $q = 2j + 1$ where $i, j \ge 1$. Then $p = i(2j + 1) + j + \frac{1+ \eps}{2}$ and $ r= (2i+1 - \beta)(2j + 1) + \eps$. The bounds on $\beta$ guarantee that $2 \le \beta \le i$ because $\frac{p}{q} = i + \frac{j + \frac{1+\eps}{2}}{2j+1}$, which is less than $i+1$. To maintain the requirement that $r \le p+q$, then, we need $(2i+1 - \beta)(2j + 1) + \eps \le (i+1)(2j + 1) + j + \frac{1+ \eps}{2}$. Reducing this inequality, we see that we need $(i- \beta)(2j+1) \le j + \frac{1-\eps}{2}$, which is only possible if $ \beta = i$. Thus we have the third family of knots listed, and the proof is complete.

\end{proof}

We recall that an $(m,n; a,b)$-Horadam sequence is the sequence $\{H_0,H_1,H_2, \ldots\}$ where $H_0 = m$, $H_1 = n$ and $H_k = aH_{k-2} + bH_{k-1}$ for $k \ge 2$. We can see that the first family of knots listed in Theorem \ref{thm:pS} can be considered either as a twisted torus knot $K(H_1, H_0, H_2, -1)$ in the $(q,p; -k, 1)$-Horadam twisted torus knots or as $K(H_2, H_1, H_0, -1)$ in  the $(p-kq, q; 1, k)$-Horadam twisted torus knots. As with the list of primitive/primitive twisted torus knots in Theorem \ref{thm:pp}, the other families of knots in Theorem \ref{thm:pS} are not apparently Horadam twisted torus knots, so it would be interesting to completely classify the Horadam twisted torus knots in these families.

 \section{Acknowledgements}

The author would like to thank Cameron Gordon for helpful comments on this work. Additionally, the author is especially thankful to the referee for careful reading of this work and constructive comments that produced a better product.

%
%

\bibliographystyle{hplain}
\bibliography{HoradamTTKs}

\end{document}